%12/12/13

\documentclass[amssymb,12pt]{amsart}

\usepackage{latexsym}

\usepackage{hyperref}
\usepackage{amssymb,amsfonts,amsmath,amsopn,amstext,amscd,latexsym}
\usepackage{bbm}
\usepackage[all,cmtip]{xy}

\usepackage{enumerate}
\usepackage[shortlabels]{enumitem}

\headheight=7pt
\textheight=574pt
\textwidth=432pt
\topmargin=14pt
\oddsidemargin=18pt
\evensidemargin=18pt

\newtheorem{propo}{Proposition}[section]
\newtheorem{prop}[propo]{Proposition}

\newtheorem{lem}[propo]{Lemma}
\newtheorem{cor}[propo]{Corollary}

\newtheorem{thm}[propo]{Theorem}

\newcommand{\Ker}{\operatorname{Ker}}

\newcommand{\Irr}{{\mathrm {Irr}}}

\def\rank{\mathop{\mathrm{ rank}}\nolimits}

\newcommand{\RR}{{\mathbb R}}
\newcommand{\QQ}{{\mathbb Q}}
\newcommand{\ZZ}{{\mathbb Z}}
\newcommand{\NN}{{\mathbb N}}

\newcommand{\SSS}{{\sf S}}

\newcommand{\AAA}{{\sf A}}
\newcommand{\FF}{{\mathbb F}}

\newcommand{\SN}{\SSS_{n}}
\newcommand{\AN}{\AAA_{n}}

\marginparsep-0.5cm

\footnotesep6.5pt

\begin{document}

\title[Refined Waring]
{A Refined Waring Problem for Finite Simple Groups}

\author{Michael Larsen}
\address{Department of Mathematics\\
    Indiana University \\
    Bloomington, IN 47405\\
    U.S.A.}
\email{mjlarsen@indiana.edu}

\author{Pham Huu Tiep}
\address{Department of Mathematics\\
    University of Arizona\\
    Tucson, AZ 85721\\
    U. S. A.} 
\email{tiep@math.arizona.edu}

\thanks{Michael Larsen was partially supported by NSF 
grant DMS-1101424 and a Simons Fellowship.  Pham Huu Tiep was partially supported by
NSF grant  DMS-1201374.}

\maketitle

\begin{abstract}
Let $w_1$ and $w_2$ be nontrivial words in free groups $F_{n_1}$ and $F_{n_2}$ 
respectively. We prove that, for all sufficiently large finite non-abelian simple groups $G$, there exist subsets $C_1 \subseteq w_1(G)$ and $C_2 \subseteq w_2(G)$ such that 
$|C_i| = O(|G|^{1/2}\log^{1/2} |G|)$ and $C_1C_2 = G$. In particular, if 
$w$ is any nontrivial word and $G$ is a sufficiently large finite non-abelian simple group,
then $w(G)$ contains a thin base of order $2$. This is a non-abelian analogue of 
a result of Van Vu \cite{Vu} for the classical Waring problem. Further results concerning
thin bases of $G$ of order 2 are established for any finite group and for any compact Lie group $G$.  
\end{abstract}
\section{Introduction}

Let $F_n$ denote the free group in $n$ generators and $w\in F_n$ a nontrivial element.
For every group $G$, the word $w$ induces a function $G^n\to G$, which we also denote $w$.
In joint work with Aner Shalev \cite{LS2,LST}, the authors proved that if $G$ is a finite simple group whose order is sufficiently large in terms of $w$, then $w(G^n)$ is a \emph{basis of order $2$}, i.e.,
every element of $G$ can be written as the product of two elements of $w(G^n)$.  In particular,
for any positive  integer $m$, the $m$th powers in $G$ form a basis of order $2$ for all sufficiently large finite simple groups; this example explains the use of the term ``Waring problem''
in the title of this paper.

The refinement we have in mind is indicated by a result of Van Vu \cite{Vu} on  the classical Waring problem.  Vu observed that the $m$th powers in the set $\NN$ of natural numbers form a 
\emph{thick} basis of sufficiently large order $s$, in the sense that the number of representations
of $n\in \NN$ as a sum of $s$ $m$th powers grows polynomially with $n$.  He proved that the $m$th powers contain \emph{thin} subbases of order $s$, i.e. subsets $X$ for which every element of $\NN$ can be written as a sum of $s$ elements of $X$, but the growth of the number of representations is logarithmic.  He asked one of us if there is an analogous result in the group-theoretic setting, i.e., if $w(G^n)$ contains a thin subbase of order $2$.  The main result of this paper gives an affirmative answer to this question; in fact, the growth of the average number of representations of $g\in G$  is
$O(\log|G|)$.  

More precisely, our result is as follows.  We state it asymmetrically, i.e. in the more general case that
we have two possibly different words $w_1$ and $w_2$ instead of a single word $w$.

\begin{thm}
\label{main}
Let $w_1$ and $w_2$ be nontrivial words in free groups $F_{n_1}$ and $F_{n_2}$ respectively.
For all  sufficiently large finite non-abelian simple groups $G$, there exist subsets 
$C_1\subseteq w_1(G)$ and $C_2\subseteq w_2(G)$
such that $|C_i| = O(|G|^{1/2}\log^{1/2} |G|)$ and $C_1 C_2 = G$.
\end{thm}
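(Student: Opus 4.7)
\emph{Strategy.} The plan is a probabilistic construction, a group-theoretic analogue of Van Vu's \cite{Vu} sampling argument for thin bases of $\NN$. Set $N = |G|$, fix a large constant $C = C(w_1, w_2)$, and put $k = \lceil C(N\log N)^{1/2}\rceil$. Pick $a_1, \ldots, a_k$ i.i.d.\ uniformly from $w_1(G)$, and independently pick $b_1, \ldots, b_k$ i.i.d.\ uniformly from $w_2(G)$. Set $C_1 = \{a_i\}\subseteq w_1(G)$ and $C_2 = \{b_j\}\subseteq w_2(G)$; clearly $|C_i|\leq k = O((N\log N)^{1/2})$. I will show that $\Pr[C_1 C_2 = G]>0$, which yields a valid realization.

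\emph{Mixing input.} The essential ingredient is a quantitative equidistribution bound. Let
\[
R(g) := \#\{(h_1, h_2) \in w_1(G)\times w_2(G) : h_1 h_2 = g\}.
\]
One needs $R(g) \geq c\,|w_1(G)|\,|w_2(G)|/N$ for every $g \in G$, with $c=c(w_1,w_2)>0$ absolute. Combined with the lower bounds $|w_i(G)| \geq c_i N$ recalled in the introduction, this implies that, for $a$ uniform on $w_1(G)$, $\Pr[a^{-1}g \in w_2(G)] \geq \beta$ for some absolute $\beta = \beta(w_1,w_2)>0$. By Fourier analysis on $G$, the bound on $R(g)$ reduces to estimating
\[
\Big|\sum_{\chi\in\Irr(G)\setminus\{1\}} \chi(g^{-1})\,\hat\mu_1(\chi)\,\hat\mu_2(\chi)\Big|,
\]
where $\hat\mu_i(\chi) = |w_i(G)|^{-1}\sum_{h\in w_i(G)}\chi(h)$; the required cancellation comes from sharp character ratio bounds for word values.

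\emph{Covering, and the main obstacle.} Fix $g \in G$ and condition on $C_1$. Let $T_g = \{a_1^{-1}g, \ldots, a_k^{-1}g\}\cap w_2(G)$, viewed as a subset of $w_2(G)$. Then $N(g):=\#\{(i,j) : a_i b_j = g\}=0$ exactly when $C_2\cap T_g=\emptyset$, and this conditional probability is $(1 - |T_g|/|w_2(G)|)^k\leq \exp(-k|T_g|/|w_2(G)|)$. A Chernoff bound on the multiset count $M_g := \sum_i \mathbbm{1}[a_i^{-1}g\in w_2(G)]$, a sum of $k$ i.i.d.\ Bernoullis of mean $\geq \beta$, gives $M_g \geq \beta k/2$ with probability $1 - e^{-\Omega(k)}$; and the birthday-type estimate $\EE[\#\{\text{collisions among } a_i\}] = O(k^2/|w_1(G)|) = O(\log N) = o(k)$ ensures $|T_g| \geq \beta k/4$ with the same high probability. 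Hence $\Pr[N(g)=0] \leq \exp(-\Omega(k^2/N)) = N^{-\Omega(C^2)}$; choosing $C$ large and union-bounding over the $N$ choices of $g$ gives $\Pr[C_1 C_2 \neq G] = o(1)$, so some realization works. The main obstacle will be the mixing input itself: the bound $R(g)\geq c|w_1(G)||w_2(G)|/N$ must hold \emph{uniformly} in $g$ and across all sufficiently large finite non-abelian simple $G$, forcing one to assemble refined character-value estimates on word maps across the classification---alternating groups, classical Lie type groups via Deligne--Lusztig theory, and exceptional groups---upgrading the preliminary bounds from the authors' earlier joint work with Shalev. Once that input is secured, the probabilistic covering is a clean non-abelian analogue of Vu's sampling construction.
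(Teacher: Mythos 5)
Your probabilistic sampling skeleton matches the machinery the paper builds in \S2 (Lemmas 2.1--2.2 and Proposition 2.4 are precisely the tools for ``random subsets of the right cardinality catch every well-represented element''), and you are right that a union bound over the roughly $|G|$ targets $g$ forces the $\sqrt{\log|G|}$ factor. However, the mixing input you reduce to is materially stronger than what the paper proves or uses, and this is where the proposal has a real gap.

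You require $R(g)\ge c\,|w_1(G)|\,|w_2(G)|/|G|$ for \emph{every} $g\in G$, uniformly across all sufficiently large simple groups. The paper deliberately avoids this. Instead, Corollary 2.5 is set up so that one only needs a representation lower bound of the form $|\{(x,y)\in X\times Y:xy=g\}|\ge |X|\,|Y|/2|S|$ on $S\setminus S_1$ for some exceptional set $S_1$ with $|S_1|\le |S|^{1/2}$, together with a crude covering of $S_1$ using $w_1(S)w_2(S)=S$ from \cite{LST}. Moreover, the sets $X,Y$ used are not $w_i(S)$ but carefully chosen single conjugacy classes $s_i^S\subseteq w_i(S)$ of regular semisimple elements (for Lie type) or of elements with bounded cycle type (for alternating groups). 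The exceptional set is not a convenience; it is forced. For classical groups of unbounded rank, the character-ratio bound of \cite[Theorem 1.2.1]{LST} degrades when $g$ has small support, and Lemma 3.8 is exactly the count showing those $g$ number at most $|S|^{1/2}$. For alternating groups the situation is worse: for $g$ with many fixed points one cannot get the needed cancellation from any fixed pair of small-cycle-type classes, so \S4 runs a three-part decomposition of $\AN$ by fixed-point count, choosing for each fixed-point set $T$ a separate pair of classes inside the pointwise stabilizer $\AAA_T$ and then taking the union over $T$. Your outline, which samples directly from $w_i(\AN)$ and asserts a single uniform equidistribution bound, elides all of this; you would have to prove that the indicator functions of $w_i(G)$ have uniformly small nontrivial Fourier coefficients relative to $\chi(1)$, a statement that is neither in the cited literature nor obviously true (and for alternating groups the analogous statement for fixed conjugacy classes is false). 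In short, the probabilistic covering half is right and essentially the same as the paper's, but the hard half --- which conjugacy classes to sample from, and how to neutralize the elements where character estimates fail --- is where the paper does its work, and the proposal replaces it with an assertion that is at least as hard as the original theorem.
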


It is known that for many words $w$, we have $w(G^n) = G$ for all $G$ sufficiently large.  For instance, the commutator word in $F_2$ satisfies this equality for all finite simple $G$, cf. \cite{EG}, \cite{LBST}.  In this case, we are looking for a thin subbase of $G$ itself, and we prove that such order $2$ subbases $X_G$ exist, not merely for finite simple groups but for all finite groups, where the average number of representations of $G$ as a product of two elements in $X_G$
is $O(1)$ as $|G|\to \infty$, see Corollary \ref{root}.  We
conclude with an analogous result for compact Lie groups, cf. Proposition \ref{compact1} and
Theorem \ref{compact2}.

\section{The Probabilistic Method}

Given subsets $X$ and $Y$ of a finite group $G$ with $XY = G$, we would like to find subsets $X_0\subseteq X$ and $Y_0\subseteq Y$ such that $X_0 Y_0$ is still all of $G$,
while $|X_0| |Y_0|$ is only
slightly larger than $|G|$.  
In this section we show that  appropriately large random subsets $X_0\subseteq X$ and $Y_0\subseteq Y$  
usually have the property that $X_0 Y_0$ includes every element of $G$ that has many representations of the form $xy$, $x\in X$, $y\in Y$.

\begin{lem}
\label{empty}
Let  $a,b,n$ be positive integers,  $N$ a set of cardinality $n$, $A\subseteq N$ a fixed subset of cardinality $a$, and $B\subseteq N$ a random subset chosen uniformly
from all $b$-element subsets of $N$.  Then 
$$\Pr[A\cap B = \emptyset] \le e^{-ab/n}.$$
\end{lem}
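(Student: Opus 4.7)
The plan is to compute $\Pr[A\cap B = \emptyset]$ exactly and then apply the standard inequality $1-x \le e^{-x}$. Since $B$ is drawn uniformly from $b$-element subsets of $N$, disjointness from $A$ is just the condition that $B$ lies inside the $(n-a)$-element set $N\setminus A$, so
$$\Pr[A\cap B = \emptyset] = \frac{\binom{n-a}{b}}{\binom{n}{b}} = \prod_{i=0}^{b-1}\frac{n-a-i}{n-i}.$$

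Next, I would show that each factor in the product is bounded above by $1-a/n$. Indeed, $(n-a-i)/(n-i) \le (n-a)/n$ is equivalent after cross-multiplication to $ai \ge 0$, which holds for all $i\ge 0$. (Intuitively: once one has conditioned on the first $i$ draws missing $A$, the remaining pool $N\setminus\{\text{drawn}\}$ still has exactly $a$ elements of $A$ out of $n-i$, so the conditional miss probability $(n-a-i)/(n-i)$ is at most $(n-a)/n$.) Thus
$$\Pr[A\cap B = \emptyset] \le \left(1 - \frac{a}{n}\right)^b \le \exp\!\left(-\frac{ab}{n}\right),$$
where the last step uses $1-x \le e^{-x}$ applied to $x = a/n$.

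I do not expect any real obstacle: this is a standard tail bound for the hypergeometric distribution, and the only substantive point is the monotonicity of the conditional miss probability, which is a one-line verification. A minor alternative, if one prefers a symmetric argument, is to swap the roles of $A$ and $B$ via the identity $\binom{n-a}{b}/\binom{n}{b} = \binom{n-b}{a}/\binom{n}{a}$ and bound the resulting product $\prod_{j=0}^{a-1}(n-b-j)/(n-j)$ by $(1-b/n)^a$; either direction yields the same $e^{-ab/n}$ bound.
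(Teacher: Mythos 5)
Your proof is correct and follows the same approach as the paper: compute the probability exactly as $\binom{n-a}{b}/\binom{n}{b}$, rewrite it as a product of $b$ factors each at most $1-a/n$, and finish with $1-x\le e^{-x}$. The only cosmetic difference is that you justify the per-factor bound explicitly while the paper states it directly, and the paper separately notes the trivial case $a+b>n$, which your product formulation handles automatically since one factor vanishes.
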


\begin{proof}
The statement is trivial if $a+b>n$, so we assume $a+b\le n$.
The probability that $A\cap B = \emptyset$ is
\begin{align*}
\frac{\binom {n-a}b}{\binom nb} &= \frac{(n-a)! \, (n-b)!}{n! \, (n-a-b)!} = \frac{(n-a)(n-a-1)\cdots (n-a-b+1)}{n(n-1)\cdots (n-b+1)} \\
                      &\le (1-a/n)^b \le e^{-ab/n}. \\
\end{align*}

\end{proof}

The following lemma gives a somewhat cruder but more general estimate than Lemma~\ref{empty}.

\begin{lem}
\label{too-small}
Let  $a,b,n$ be positive integers,  $N$ a set of cardinality $n$, $A\subseteq N$ a fixed subset of cardinality $a$, and $B\subseteq N$ a random subset chosen uniformly
from all $b$-element subsets of $N$.  Then 
$$\Pr\Bigl(|A\cap B| \le \frac{ab}{e^2 n}\Bigr) \le (2.2)e^{-\frac{5ab}{2e^2 n}}.$$
\end{lem}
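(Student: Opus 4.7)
The plan is to apply Chernoff's exponential-moment method to $X := |A\cap B|$, a hypergeometric random variable with mean $\mu := ab/n$. Writing $X$ as a sum of the $a$ indicators $\chi_i = \mathbf{1}[i\in B]$ for $i \in A$, Hoeffding's classical theorem on sampling without replacement (equivalently, the negative association of the $\chi_i$) dominates its moment generating function by that of the binomial $\mathrm{Bin}(a,\,b/n)$ with the same mean:
$$\mathbb{E}[e^{-sX}] \;\le\; \bigl(1-\tfrac{b}{n}(1-e^{-s})\bigr)^a \;\le\; \exp\bigl(\mu(e^{-s}-1)\bigr), \qquad s>0.$$
Combined with Markov's inequality applied to $e^{-sX}$, this yields the standard Chernoff inequality $\Pr[X\le t] \le \exp(st+\mu(e^{-s}-1))$.

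For the target threshold $t=\mu/e^2$, the minimum of the right-hand side in $s$ occurs at $s=2$, and substituting gives
$$\Pr[X\le \mu/e^2] \;\le\; \exp\bigl(-\mu(1-3e^{-2})\bigr).$$
Since $1-3e^{-2}>5/(2e^2)$, this already beats the target $e^{-5\mu/(2e^2)}$ with room to spare.

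The only subtlety is that $X$ is integer-valued: when $\mu<e^2$ the event $\{X\le\mu/e^2\}$ collapses to $\{X=0\}$ and the MGF computation above is no longer a useful tail statement in its own right. In that regime I would invoke Lemma~\ref{empty} directly, obtaining $\Pr[X=0]\le e^{-\mu}\le e^{-5\mu/(2e^2)}$ since $1>5/(2e^2)$. The factor $2.2$ in the statement is a generous slack that absorbs this low-mean boundary case (and could in fact be eliminated). I do not anticipate a substantial obstacle; the only non-routine ingredient is Hoeffding's MGF domination, which is classical and can alternatively be verified by hand from the recursion $\Pr[X=j+1]/\Pr[X=j]=(a-j)(b-j)/((j+1)(n-a-b+j+1))$.
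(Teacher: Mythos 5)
Your proof is correct, but it follows a genuinely different route from the paper's. The paper's argument is entirely elementary and self-contained: it bounds each point mass $\Pr[|A\cap B|=k]$ by an explicit $\exp(f(k))$ via the inequality $k!>(k/e)^k$ and coarse estimates on the ratio of binomial coefficients, then observes that $f(k)-f(k-1)>2$ for $1<k\le r:=ab/(e^2n)$ and $f(r)<-2.5r$, so the sum over $1\le k\le r$ is dominated by the geometric series $\frac{1}{1-e^{-2}}e^{-2.5r}<1.2\,e^{-2.5r}$; Lemma~\ref{empty} handles $k=0$ separately, and $1.2+1=2.2$. You instead run the Chernoff exponential-moment method: dominate the MGF $\mathbb{E}[e^{-sX}]$ by the corresponding binomial MGF via Hoeffding's convex-ordering theorem for sampling without replacement, optimize at $s=2$, and land on $\exp(-\mu(1-3e^{-2}))$, which is numerically stronger than $2.2\,e^{-5\mu/(2e^2)}$ outright. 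Each has its virtues: your approach is shorter and gives a cleaner constant, but it imports a non-trivial external fact (Hoeffding 1963); the paper's is longer but needs nothing beyond $k!>(k/e)^k$. One small point: your concern about the regime $\mu<e^2$ is unfounded. The Chernoff bound $\Pr[X\le t]\le e^{st}\mathbb{E}[e^{-sX}]$ is valid for any real $t\ge 0$ and does not care whether $X$ is integer-valued or whether the threshold is below $1$; the computation already gives $\Pr[X=0]\le\Pr[X\le\mu/e^2]\le e^{-\mu(1-3e^{-2})}$ there, so the fallback to Lemma~\ref{empty} is harmless but unnecessary, and the constant $2.2$ could indeed be dropped with your method.
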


\begin{proof}
Assume $\max(a+b-n,0)\le k \le \min(a,b)$ so that $k$ is a possible size for $A\cap B$. For $k > 0$ we have $k! > (k/e)^k$ and so 
the probability that $|A\cap B| = k$ is
\begin{align*}
\frac{\binom ak \binom{n-a}{b-k}}{\binom nb} &= \frac{a! \, b! \, (n-a)! \, (n-b)!}{k! \, (a-k)! \, (b-k)! \, n! \, (n-a-b+k)!} \\
                   &=\frac{b\cdots (b-k+1)}{k!}\frac{a\cdots(a-k+1)}{n\cdots (n-k+1)}\frac{(n-a)\cdots(n-a-b+k+1)}{(n-k)\cdots (n-b+1)} \\
                  & < \frac{b^k}{(k/e)^k} \frac{a^k}{n^k} \frac{(n-a)^{b-k}}{(n-k)^{b-k}} \le \frac{(ab/n)^k}{(k/e)^k} \exp(-\frac{(b-k)(a-k)}{n-k})= \exp(f(k)), \\
\end{align*}
where
$$f(x) := x+x\log ab/n - x\log x - g(x),~~g(x) := (a-x)(b-x)/(n-x).$$
Let $r :=  ab/e^2n \leq \min(a/e^2,b/e^2)$. 
Then, when $0 < x \leq r$ we have $f'(x) > 2$, and so $f(x)$ is increasing
on $(0,r]$ and $f(x)-f(x-1) > 2$ when $1 < x \leq r$. Also,
$$g(r) \geq \frac{ab(1-e^{-2})^2}{n} > 5.5r,~~~f(r) = 3r -g(r) < -2.5r$$
It follows that     
$$\Pr(0 < |A\cap B| \le r) \le \sum_{i=1}^{\lfloor r\rfloor} \exp(f(i)) < \frac 1{1-e^{-2}} \exp(f(r)) < \frac{e^{-2.5r}}{1-e^{-2}} < (1.2)e^{-2.5r}.$$
Together with Lemma \ref{empty}, this implies the claim.
\end{proof}

\begin{prop}
\label{main-prop}
Let  $c > 0$ be a constant and $X$, $Y$, and $Z$ subsets of a finite group $G$ such that for all $z\in Z$,
$$|\{(x,y)\in X\times Y\mid xy = z\}| \ge \frac{c|X|\,|Y|}{|G|}.$$
Let $x_0 \le |X|$ and $y_0\le |Y|$ be positive integers such that 
$$x_0 y_0 \geq (2e^2/c) |G| \log |G|.$$
Then there exist subsets $X_0\subseteq X$ and $Y_0\subseteq Y$, with $x_0$ and $y_0$ elements respectively,  such that $X_0 Y_0 \supseteq Z$.
\end{prop}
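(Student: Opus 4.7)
The plan is a two-stage random construction. Sample $Y_0 \subseteq Y$ uniformly among $y_0$-element subsets and then, independently, $X_0 \subseteq X$ uniformly among $x_0$-element subsets, and show that with positive probability every $z \in Z$ lies in $X_0 Y_0$. The two preceding lemmas will govern the two stages: Lemma~\ref{too-small} guarantees that $Y_0$ captures many ``good'' elements, and Lemma~\ref{empty} then guarantees that $X_0$ hits the corresponding translates in $X$.

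For each $z \in Z$, set $B_z := \{y \in Y \mid zy^{-1} \in X\} \subseteq Y$; the map $y \mapsto (zy^{-1},y)$ identifies $B_z$ with $\{(x,y) \in X \times Y \mid xy = z\}$, so by hypothesis $|B_z| \geq c|X||Y|/|G|$. Set the intermediate threshold $r := c|X|y_0/(e^2|G|)$. Applying Lemma~\ref{too-small} with $n = |Y|$, $A = B_z$, $b = y_0$ gives $\Pr(|B_z \cap Y_0| \leq r) \leq 2.2\, e^{-5r/2}$. The hypothesis $x_0 y_0 \geq (2e^2/c)|G|\log|G|$ together with $x_0 \leq |X|$ forces $r \geq 2\log|G|$, so this probability is at most $2.2\,|G|^{-5}$. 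A union bound over $z \in Z$ (using $|Z| \leq |G|$) shows that, with probability at least $1 - 2.2\,|G|^{-4}$, the ``good event'' $|B_z \cap Y_0| \geq r$ holds for every $z \in Z$.

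Conditioning on the good event, define $C_z := \{zy^{-1} \mid y \in B_z \cap Y_0\} \subseteq X$; then $|C_z| \geq r$, and $z \in X_0 Y_0$ precisely when $X_0 \cap C_z \neq \emptyset$. By Lemma~\ref{empty} with $n = |X|$, $A = C_z$, $b = x_0$, we obtain $\Pr(X_0 \cap C_z = \emptyset) \leq \exp(-r x_0/|X|) = \exp(-c x_0 y_0/(e^2|G|)) \leq |G|^{-2}$. A second union bound bounds the conditional failure probability by $|G|^{-1}$, so the total probability of failure is at most $2.2\,|G|^{-4} + |G|^{-1} < 1$ for $|G| \geq 2$ (with $|G| = 1$ trivial). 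Hence a valid pair $(X_0, Y_0)$ exists. The only real delicacy is the choice of the intermediate threshold $r$: it must be large enough for Lemma~\ref{empty} to give a useful hit estimate on $C_z$, yet small enough that Lemma~\ref{too-small} forbids shortfalls of $|B_z \cap Y_0|$; the hypothesis on $x_0 y_0$ is precisely what makes both estimates succeed simultaneously.
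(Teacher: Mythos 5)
Your proof is correct and follows essentially the same two-stage random argument as the paper, with the roles of $X$ and $Y$ swapped: you apply Lemma~\ref{too-small} to $Y_0$ and then Lemma~\ref{empty} to $X_0$, whereas the paper uses Lemma~\ref{too-small} on $X_0$ and Lemma~\ref{empty} on $Y_0$; the only other difference is that you run two global union bounds (first over the good event, then conditionally) while the paper bounds $\Pr[z\notin X_0Y_0]$ for each $z$ by $1/n$ and union-bounds once at the end, which is the same probabilistic content with different bookkeeping.
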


\begin{proof}
Let $n$ denote the order of $G$, which we may assume is at least $2$.
We choose $X_0$ and $Y_0$ at random independently and uniformly from the subsets of $X$ of cardinality $x_0$ and the subsets of $Y$ of
cardinality $y_0$ respectively.
It suffices to prove that for each $z\in Z$, the probability that $z \in X_0 Y_0$ is more than $1-1/n$.
(Indeed, in this case the probability that $X_0Y_0 = G$ is larger than $1-n/n = 0$, i.e. 
$X_0Y_0 = G$.)
Let $S_z$ denote the set of pairs $(x,y)\in X\times Y$ such that $xy=z$, and let $\pi_X$, $\pi_Y$ denote the projection maps
from $X\times Y$ to $X$ and $Y$ respectively.  
We want to prove that the probability that $\pi_Y^{-1}(Y_0)\cap \pi_X^{-1} (X_0)\cap S_z$ is non-empty is more than $1-1/n$.

As $G$ is a group, the restrictions of $\pi_X$ and $\pi_Y$ to $S_z$ are injective, so 
$$|\pi_X^{-1} (X_0)\cap S_z| = |\pi_X (S_z)\cap X_0|,\ |\pi_Y^{-1}(Y_0)\cap \pi_X^{-1} (X_0)\cap S_z| = |\pi_Y(\pi_X^{-1} (X_0)\cap S_z)\cap Y_0|.$$
It suffices to prove that the probability that $\pi_X (S_z)\cap X_0$ has at least $\frac{x_0 |S_z|}{e^2 |X|}$ elements is at least $1-1/2n$
and that the conditional probability that $\pi_Y(\pi_X^{-1} (X_0)\cap S_z)\cap Y_0$ is non-empty given that 
\begin{equation}
\label{condition}
|\pi_X (S_z)\cap X_0| \ge \frac{x_0 |S_z|}{e^2 |X|}
\end{equation}
is at least $1-1/2n$.

By hypothesis,
$$\frac{|X_0| |\pi_X(S_z)|} {|X|} = \frac{x_0 |S_z|} {|X|} \ge \frac{c x_0  |Y|} n \ge \frac{c x_0 y_0}n \geq 2e^2 \log n.$$
By Lemma~\ref{too-small}, the probability that 
$$|X_0 \cap \pi_X(S_z)| = |\pi_X^{-1} (X_0)\cap S_z|  \le \frac{x_0 |S_z|} {e^2|X|}$$
is at most $2.2/n^5 < 1/2n$.  
If (\ref{condition}) holds, then
$$\frac{|Y_0| |\pi_X^{-1} (X_0)\cap S_z| }{|Y|} \ge \frac{x_0 y_0 |S_z|} {e^2|X||Y|} \ge \frac{2 n \log n |S_z|}{c |X| |Y|} \ge 2\log n.$$
By Lemma~\ref{empty}, the probability of $Y_0$ being disjoint from a subset of $Y$ of cardinality at least $\frac{x_0 |S_z|} {e^2|X|}$
is at most $1/n^2 \leq 1/2n$.
\end{proof}

\begin{cor}\label{subset}
Let $w_1$, $w_2$ be two nontrivial words and $S$ a finite simple group. To prove Theorem \ref{main} for $(w_1, w_2, S)$, it suffices to show that there exist subsets
$X \subseteq w_1(S)$, $Y \subseteq w_2(S)$ and a subset 
$S_1 \subset S$ of cardinality at most $|S|^{1/2}$, such that 

\begin{enumerate}[\rm(i)]
\item $w_1(S)w_2(S) = S$;

\item $|\{(x,y) \in X \times Y \mid xy =g\}| \geq \dfrac{|X| \cdot |Y|}{2|S|}$   
for all $g \in S \setminus S_1$;

\item $|X|, |Y| \geq 2e|S|^{1/2}\log^{1/2}|S|$.
\end{enumerate}
\end{cor}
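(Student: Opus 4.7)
The plan is to show that conditions (i)--(iii) make Proposition~\ref{main-prop} directly applicable to the ``bulk'' $Z = S \setminus S_1$, and then to patch the residual set $S_1$ using $O(|S|^{1/2})$ extra elements supplied by (i). The corollary is essentially a packaging step: no deep new input is required beyond matching up parameters.

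First I will set $x_0 = y_0 := \lceil 2e\, |S|^{1/2} \log^{1/2} |S| \rceil$. By (iii) we have $x_0 \leq |X|$ and $y_0 \leq |Y|$ (both sides are integers, and $\lceil r\rceil \leq m$ whenever $m\geq r$ is an integer), while by construction
$$x_0 y_0 \;\geq\; 4e^2 |S| \log |S| \;=\; (2e^2/c)\,|S| \log |S| \text{ with } c = 1/2.$$
Condition (ii) is precisely the hypothesis of Proposition~\ref{main-prop} for the triple $(X, Y, S \setminus S_1)$ with this same $c$. Invoking the proposition yields $X_0 \subseteq X \subseteq w_1(S)$ and $Y_0 \subseteq Y \subseteq w_2(S)$ with $|X_0| = x_0$, $|Y_0| = y_0$, and $X_0 Y_0 \supseteq S \setminus S_1$.

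To cover what remains, I use (i): for each $g \in S_1$, choose $a_g \in w_1(S)$ and $b_g \in w_2(S)$ with $a_g b_g = g$, and set $A := \{a_g : g \in S_1\}$, $B := \{b_g : g \in S_1\}$, so that $|A|, |B| \leq |S_1| \leq |S|^{1/2}$. Defining
$$C_1 := X_0 \cup A \subseteq w_1(S), \qquad C_2 := Y_0 \cup B \subseteq w_2(S),$$
we obtain $|C_i| \leq x_0 + |S|^{1/2} = O(|S|^{1/2} \log^{1/2} |S|)$, and
$$C_1 C_2 \;\supseteq\; X_0 Y_0 \,\cup\, \{a_g b_g : g \in S_1\} \;=\; (S \setminus S_1) \cup S_1 \;=\; S,$$
which is the conclusion of Theorem~\ref{main} for the triple $(w_1, w_2, S)$.

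The only steps that really require checking are arithmetic: that the constant in (iii) is just right to feed Proposition~\ref{main-prop} with $c = 1/2$, and that the patch contributes only $O(|S|^{1/2})$ elements, which is absorbed into the $O(|S|^{1/2} \log^{1/2} |S|)$ bound. There is no genuine obstacle here; all the substance of Theorem~\ref{main} is relegated to the later task of exhibiting $X$, $Y$, and a small exceptional set $S_1$ satisfying (i)--(iii) inside a finite non-abelian simple group.
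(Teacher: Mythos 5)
Your proof is correct and follows essentially the same route as the paper: apply Proposition~\ref{main-prop} with $c=1/2$ and $Z = S \setminus S_1$, then patch the exceptional set using (i). (A minor point in your favor: you take $x_0 = y_0 = \lceil 2e|S|^{1/2}\log^{1/2}|S|\rceil$ and check both $x_0 \le |X|$ and $x_0 y_0 \ge 4e^2|S|\log|S|$ cleanly, whereas the paper uses $\lfloor\cdot\rfloor$, which makes the first inequality obvious but can in principle leave $x_0 y_0$ just shy of the required lower bound.)
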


\begin{proof}
Choose $x_0 = y_0 := \lfloor 2e|S|^{1/2}\log^{1/2}|S| \rfloor$ (note that 
we still have $x_0 \leq |X|$ and $y_0 \leq |Y|$). By Proposition \ref{main-prop} with $c=1/2$, there exist
subsets $X_0 \subseteq X$ and $Y_0 \subseteq Y$ with $X_0Y_0 \supseteq S \setminus S_1$,
$|X_0| = x_0$, and $|Y_0| = y_0$. For each $z \in S_1$, by (i) there exists 
$(x_z,y_z) \in w_1(S) \times w_2(S)$ such that $z = x_zy_z$. Now set
$$C_1 := X_0 \cup \{x_z \mid z \in S_1\},~~C_2 := Y_0 \cup \{y_z \mid z \in S_1\}.$$
\end{proof}

\begin{cor}
If  $x_0$ and $y_0$ are  integers in $[1,|G|]$ such that $x_0 y_0 > 2e^2 |G| \log |G|$, then there exist
subsets $X_0$ and $Y_0$ of $G$ of cardinality $x_0$ and $y_0$ respectively such that $X_0 Y_0 = G$.
\end{cor}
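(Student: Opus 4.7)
The plan is to derive this as a direct application of Proposition~\ref{main-prop}. I would take $X = Y = Z = G$ and verify that the hypotheses of the proposition are satisfied with constant $c = 1$.

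First I would check the representation count condition. For every $z \in G$, the equation $xy = z$ with $x, y \in G$ has exactly $|G|$ solutions (one for each choice of $x$, determining $y = x^{-1}z$). On the other hand, $\frac{c|X|\,|Y|}{|G|} = \frac{|G|^2}{|G|} = |G|$ when $c = 1$, so the required lower bound
$$|\{(x,y) \in X \times Y \mid xy = z\}| \geq \frac{c|X|\,|Y|}{|G|}$$
holds with equality for all $z \in Z = G$.

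Next I would verify the size condition on $x_0$ and $y_0$. The hypothesis gives $x_0 y_0 > 2e^2 |G|\log|G| = (2e^2/c) |G|\log|G|$ with $c = 1$, and the requirement $x_0 \leq |X| = |G|$, $y_0 \leq |Y| = |G|$ is ensured by the assumption that $x_0, y_0 \in [1, |G|]$. Applying Proposition~\ref{main-prop} then yields subsets $X_0 \subseteq G$ and $Y_0 \subseteq G$ of cardinalities $x_0$ and $y_0$ respectively with $X_0 Y_0 \supseteq Z = G$, hence $X_0 Y_0 = G$.

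There is essentially no obstacle here — this is a specialization of the main proposition to the trivial case $X = Y = G$, designed to highlight that the probabilistic bound already gives near-optimal thin factorizations of arbitrary finite groups, without requiring any word-value structure. The only minor care is to note that when $x_0$ or $y_0$ equals $|G|$, the inequality $x_0 \leq |X|$ is still satisfied, so the proposition applies throughout the stated range.
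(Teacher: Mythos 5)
Your proof is correct and takes exactly the same approach as the paper, which simply sets $X = Y = Z := G$ and $c = 1$ in Proposition~\ref{main-prop}. The extra verification you include (that each $z$ has exactly $|G| = c|X||Y|/|G|$ representations) is the natural check the paper leaves implicit.
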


\begin{proof}
Set $X = Y = Z := G$ and $c=1$ in Proposition \ref{main-prop}.
\end{proof}

\begin{cor}
There exists a \emph{square root} $R$ of $G$, i.e., a subset such that $R^2 = G$, with 
$|R| \le 2^{1/2}e |G|^{1/2} \log^{1/2}|G|$.
\end{cor}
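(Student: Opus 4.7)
The plan is to adapt the probabilistic argument of Proposition \ref{main-prop} to the symmetric case $X_0 = Y_0 = R$ by drawing a single random subset $R \subseteq G$ and analyzing $\Pr[R^2 = G]$ directly. Set $n := |G|$ and $r := \lceil \sqrt{2}\,e\,(n\log n)^{1/2}\rceil$, and let $R$ be a uniformly chosen $r$-subset of $G$; it suffices to show $\Pr[R^2 = G] > 0$.

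By the union bound, we need $\Pr[z \notin R^2] < 1/n$ for each $z \in G$. The event $z \notin R^2$ is precisely $R \cap z R^{-1} = \emptyset$: equivalently, $R$ contains no fixed point of the map $\sigma_z : x \mapsto x^{-1}z$ (these are the $k(z)$ square roots of $z$), and for each unordered pair $\{x, \sigma_z(x)\}$ with $x \ne \sigma_z(x)$, $R$ contains at most one of $x, \sigma_z(x)$. These pairs are the edges of a graph $\Gamma_z$ on $G$ whose connected components, organized by the orbits of $\sigma_z$, are either a forbidden singleton (at a fixed point), a single edge (orbit of size $2$), or a cycle $C_\ell$ (orbit of size $\ell \ge 3$); crucially these components are vertex-disjoint.

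Passing to the Bernoulli model (including each element of $G$ in $R$ independently with probability $p := r/n$) makes the components of $\Gamma_z$ independent. Restricting each $C_\ell$ to a maximum matching of $\lfloor \ell/2 \rfloor \ge \ell/3$ edges gives $\Pr[R \text{ is independent in } C_\ell] \le (1-p^2)^{\lfloor \ell/2 \rfloor}$, whence
\[
\Pr[z \notin R^2] \le (1-p)^{k(z)} (1-p^2)^{(n-k(z))/3} \le \exp\Bigl(-\frac{p^2 n}{3}\Bigr)
\]
after minimization over $k(z)$. With $p^2 n = r^2/n \ge 2e^2 \log n$, the right-hand side is $\le n^{-2e^2/3}$; the union bound over $z$ then yields $\Pr[R^2 \ne G] \le n^{1 - 2e^2/3} = o(1)$. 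Finally, because $\{R^2 = G\}$ is monotone increasing in $R$ and $|R| \sim \mathrm{Bin}(n,p)$ has median $r$, one obtains a fixed subset of cardinality at most $r$ with $R^2 = G$ (extending a slightly smaller $R$ up to size $r$ preserves the event).

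The main obstacle is obtaining a clean per-component bound despite the cycles of $\Gamma_z$ being of arbitrary length: $\sigma_z$ is generally not an involution (its square equals conjugation by $z^{-1}$), so cycle lengths can be large and odd. The matching-based bound is lossy for long cycles, but the stated constant $\sqrt{2}\,e$ leaves a factor of $e$ of slack over the Bernoulli threshold $\sqrt{2n\log n}$, which absorbs this loss; for contrast, the naive route of applying the previous corollary with $x_0 = y_0 = r$ and setting $R := X_0 \cup Y_0$ yields only $|R| \le 2r$, missing the target by a factor of $2$.
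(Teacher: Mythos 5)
Your proof is correct, and it takes a genuinely different route from what the paper (implicitly) intends. The corollary is stated without proof; the natural reading---apply the immediately preceding corollary with $x_0 = y_0 \approx \sqrt{2}\,e\,(|G|\log|G|)^{1/2}$ and set $R := X_0\cup Y_0$---gives only $|R|\le 2\sqrt{2}\,e\,(|G|\log|G|)^{1/2}$, a factor of $2$ worse than stated, and you correctly flag this. Your argument instead attacks the symmetric problem directly: in the Bernoulli model, $z\notin R^2$ iff $R$ avoids the fixed points of $\sigma_z\colon x\mapsto x^{-1}z$ and is independent on the disjoint cycles formed by its remaining orbits; a per-component matching bound together with $\lfloor\ell/2\rfloor\ge\ell/3$ gives $\Pr[z\notin R^2]\le e^{-p^2n/3}$, and the factor of $e$ of slack over the ideal threshold $\sqrt{2n\log n}$ absorbs both this loss and the cost of returning to a fixed-size set via the binomial median. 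Two minor points worth adding for completeness: if $r>n$ (equivalently $n\le 2e^2\log n$), one should simply take $R=G$, which meets the bound since $n\le \sqrt{2}\,e\,(n\log n)^{1/2}$ in that range; and the final ``extend to size $r$'' remark is superfluous, since $|R|\le r$ already suffices. Both your argument and the paper's are in any case superseded by Corollary~\ref{root}, which removes the $\log$ factor entirely by a deterministic, subgroup-based argument rather than a probabilistic one.
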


In fact, we will show that $G$ has a square root of size $O(|G|^{1/2})$, see Corollary \ref{root}.
Analogues of this result for compact Lie groups will be proved in \S6, see Proposition
\ref{compact1} and Theorem \ref{compact2}.

\section{Simple Groups of Lie Type}

\newcommand{\cG}{\mathcal{G}}
\newcommand{\cT}{\mathcal{T}}
\newcommand{\rmO}{\mathrm{O}}
\newcommand{\bfZ}{\mathbf{Z}}
\newcommand{\bbZ}{\mathbb{Z}}
\newcommand{\bfC}{\mathbf{C}}
\newcommand{\St}{{\sf{St}}}
\newcommand{\tw}[1]{{}^#1}

In what follows, we say that $S$ is a finite simple group of Lie type of rank $r$ defined over $\FF_q$, if 
$S = \cG^F/\bfZ(\cG^F)$ for a simple, simply connected algebraic group $\cG$ over $\FF_q$,
of rank $r$, and a Steinberg endomorphism $F:\cG \to \cG$, with $q$ the common absolute 
value of the eigenvalues of $F$ on the character group of an $F$-stable maximal torus $\cT$ of 
$\cG$. In particular, this includes the Suzuki-Ree groups, for which $q$ is a half-integer power 
of $2$ or $3$. By slight abuse of terminology, we will say that an element $s\in S$ 
is regular semisimple if some inverse image of $s$ is so in $\cG^F$.

\smallskip
The aim of this section is to prove the following theorem:

\begin{thm}\label{main1}
Let $w_1$, $w_2$ be two nontrivial words. Then there is $N = N(w_1,w_2)$ with the following 
property. For any finite non-abelian simple group $S$ of Lie type of order at least $N$, there exist conjugacy classes $s_1^S \subseteq w_1(S)$, $s_2^S \subseteq w_2(S)$ and a subset 
$S_1 \subset S$ of cardinality at most $|S|^{1/2}$, such that 
\begin{enumerate}[\rm(i)]
\item $w_1(S)w_2(S) = S$;

\item $|\{(x,y) \in s_1^S \times s_2^S \mid xy =g\}| \geq \dfrac{|s_1^S| \cdot |s_2^S|}{2|S|}$   
for all $g \in S \setminus S_1$;

\item $|s_i^S| \geq 4e|S|^{1/2}\log^{1/2}|S|$.
\end{enumerate}
\end{thm}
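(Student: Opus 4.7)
The plan is to exploit the Frobenius-type character identity. For any two conjugacy classes $s_1^S,s_2^S$ of $S$ and any $g\in S$,
$$|\{(x,y)\in s_1^S\times s_2^S : xy=g\}| \;=\; \frac{|s_1^S|\,|s_2^S|}{|S|}\sum_{\chi\in\Irr(S)} \frac{\chi(s_1)\chi(s_2)\overline{\chi(g)}}{\chi(1)}.$$
The trivial character contributes $1$, so condition (ii) is equivalent to showing that
$$f(g) \;:=\; \sum_{\chi\neq 1_S} \frac{\chi(s_1)\chi(s_2)\overline{\chi(g)}}{\chi(1)}$$
satisfies $|f(g)|\le 1/2$ for all $g$ outside a set $S_1$ of cardinality at most $|S|^{1/2}$.

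First, for condition (i) I would invoke the surjectivity results of \cite{LS2,LST}, which give $w_1(S)w_2(S)=S$ once $|S|$ is large enough. For conditions (ii) and (iii) I would choose $s_i^S\subseteq w_i(S)$ to be a regular semisimple class whose centralizer is a maximal torus $T_i$ of a carefully prescribed type (Coxeter-type, or one of the small tori reached by the LST construction). The existence of such a class inside $w_i(S)$ is essentially supplied by the Deligne-Lusztig / word-value constructions of the authors' earlier work, and choosing $|T_i|$ small makes $|s_i^S|=|S|/|T_i|$ large, which secures (iii) with room for the $\log^{1/2}|S|$ factor.

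The core estimate comes from orthogonality of irreducible characters:
$$\sum_{g\in S}|f(g)|^2 \;=\; |S|\sum_{\chi\neq 1_S}\frac{|\chi(s_1)|^2|\chi(s_2)|^2}{\chi(1)^2}.$$
Markov's inequality then bounds $|S_1|$ by four times the right-hand side, so the entire argument is reduced to proving a character sum estimate of the form
$$\sum_{\chi\neq 1_S}\frac{|\chi(s_1)|^2|\chi(s_2)|^2}{\chi(1)^2} \;\leq\; \frac{1}{4|S|^{1/2}}.$$
To obtain this, I would feed in the Larsen-Shalev-type character ratio bounds $|\chi(s_i)|/\chi(1)\le \chi(1)^{-\delta_i}$, valid for regular semisimple $s_i$ in the chosen tori with an explicit $\delta_i>0$, and then dominate the remaining sum by a Witten zeta value $\sum_{\chi\neq 1_S}\chi(1)^{-t}$, which by Liebeck-Shalev tends to zero as a negative power of $|S|$.

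The main obstacle is uniformity across all Lie types and all ranks. In large rank the zeta decay is rapid, but one must verify that $w_i(S)$ actually contains an element centralized by the specified small torus, which relies on the full strength of the LST machinery. In bounded rank the available tori are comparatively large and the low-degree characters are abundant, so the character ratio bound $\delta_i$ must be chosen sharply enough to beat the required $|S|^{-1/2}$ threshold; the small-rank classical families and the Suzuki-Ree groups will likely need to be treated by separate case analysis using explicit Deligne-Lusztig computations. Once a workable pair of tori is pinned down in each family, the bound on $|S_1|$ and the lower bound on $|s_i^S|$ become routine verifications.
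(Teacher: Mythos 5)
Your framework (Frobenius identity, regular semisimple classes in well-chosen tori, bounding the character sum $f(g)$ outside a small $S_1$) matches the paper, but the load-bearing step for condition (ii) --- bounding $K:=\sum_{\chi\neq 1_S}|\chi(s_1)|^2|\chi(s_2)|^2/\chi(1)^2$ via Parseval and Markov and a Witten zeta estimate --- is genuinely different from the paper's argument, and as stated it does not close. Markov requires $K\le\frac14|S|^{-1/2}$. Feeding in $|\chi(s_i)|\le\chi(1)^{1-\delta_i}$ with a fixed $\delta_i\le1$ (and $\delta_i\le 1$ is forced: the column orthogonality $\sum_\chi|\chi(s_i)|^2=|\bfC_S(s_i)|$ rules out $|\chi(s_i)|$ being uniformly $<1$), the best you obtain is $K\lesssim\sum_{\chi\neq1}\chi(1)^{-2}$. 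Already for $S=PSL_2(q)$ that sum is $\asymp q^{-1}\asymp|S|^{-1/3}\gg|S|^{-1/2}$, so Markov only gives $|S_1|=O(q^2)$, not the required $O(q^{3/2})$; and in unbounded rank ($SL_n(q)$, say) a single surviving character of minimal nontrivial degree $\approx q^{n-1}$ already contributes $\gtrsim q^{-2(n-1)}$, which dwarfs $|S|^{-1/2}\approx q^{-n^2/4}$. The Witten-zeta decay is simply too slow in both regimes, and no sharpening of $\delta_i$ can fix this.

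The missing idea is the \emph{weak orthogonality} of the two tori $T_1\ni s_1$ and $T_2\ni s_2$ (\cite[Def.~2.2.1, Prop.~2.2.2]{LST}), which forces $\chi(s_1)\chi(s_2)=0$ for every non-unipotent $\chi$, and then --- via the results of \cite{MSW}, \cite{GT}, \cite{LM}, and Propositions 6.1.1, 6.2.4, 6.3.5, 6.3.7 of \cite{LST} --- leaves at most $\kappa\le 4$ surviving characters (or a bounded $\kappa(w_1,w_2)$ for orthogonal groups), each with $|\chi(s_1)\chi(s_2)|$ bounded. Once the sum collapses to a handful of terms, no second moment is needed at all: in bounded rank the only surviving nontrivial character is Steinberg, which is faithful and integer-valued, so $|\St(g)/\St(1)|\le\frac12$ for $g\neq1$ and one can take $S_1=\{1\}$; in unbounded classical rank the paper takes $S_1$ to be the set of small-support elements (counted explicitly in Lemma~\ref{support}) and controls each surviving term via the \emph{$g$-dependent} ratio bound $|\chi(g)|/\chi(1)<q^{-\sqrt B/481}$ of \cite[Thm.~1.2.1]{LST} --- a mechanism fundamentally different from the $s_i$-dependent ratios you propose. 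Suzuki and Ree groups get a separate explicit character-table treatment. You correctly anticipated that bounded rank is the hard regime, but the fix is not a sharper $\delta_i$; it is eliminating almost every character from the sum before estimating anything.
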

 
Note that condition (i) follows from the main result of \cite{LST}, and (ii) is equivalent to
\begin{equation}\label{bound1}
  \left|\sum_{1_S \neq \chi \in \Irr(S)}\frac{\chi(s_1)\chi(s_2)\bar\chi(g)}{\chi(1)}\right| \geq \frac{1}{2},
  ~~\forall g \in S \setminus S_1.
\end{equation}
Also, Theorem \ref{main1} and Corollary \ref{subset} immediately imply Theorem 
\ref{main} for sufficiently large non-abelian simple groups of Lie type.
 
\smallskip
First we recall the following consequence of \cite[Proposition 7]{La}:

\begin{lem}\label{image}
For any $r_0$ and any nontrivial word $w \neq 1$, there exists a constant $c = c(w,r_0)$ such that
$$|w(S)|  \geq c|S|$$ 
for all finite simple group $S$ of Lie type of rank $\leq r_0$.
\end{lem}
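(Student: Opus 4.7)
The plan is to obtain the bound by a direct reduction to the cited result of Larsen. Larsen's Proposition~7 in \cite{La} asserts, for any nontrivial $w$ and any fixed simple simply-connected algebraic group $\cG$ of a given Dynkin type, that the word map $w\colon \cG^n \to \cG$ is dominant (by Borel) and that, as a consequence of the Lang--Weil estimates applied to the image subvariety, the fraction $|w(\cG^F)|/|\cG^F|$ is bounded below by a positive constant depending only on $w$ and the type (and twisting) of $\cG$.

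First I would observe that the finite simple groups of Lie type of rank $\leq r_0$ fall into finitely many isogeny/twisting classes, parametrized by the Dynkin type, the $F$-twist (split, unitary, triality, Suzuki/Ree) and, within each, only by the parameter $q$. Thus it suffices to produce, for each such class, a constant $c$ independent of $q$ for which $|w(S)|\geq c|S|$.

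Next, for a fixed class with associated simple simply-connected group $\cG$, I would apply \cite[Proposition~7]{La} to get $|w(\cG^F)|\geq c'|\cG^F|$ with $c'=c'(w,\cG)$. Since $S = \cG^F/\bfZ(\cG^F)$, the natural projection $\pi\colon \cG^F \to S$ sends $w(\cG^F)$ onto $w(S)$ with fibres of size $|\bfZ(\cG^F)|$; in particular
\[
|w(S)| \;\geq\; \frac{|w(\cG^F)|}{|\bfZ(\cG^F)|} \;\geq\; \frac{c'\,|\cG^F|}{|\bfZ(\cG^F)|} \;=\; c'\,|S|.
\]
Because $|\bfZ(\cG^F)|$ is bounded in terms of the rank, this step loses at most a bounded factor. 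Finally, taking the minimum of the resulting constants over the finitely many classes of rank $\leq r_0$ yields the desired uniform $c=c(w,r_0)$.

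The main obstacle is ensuring that the constant produced by \cite[Proposition~7]{La} is genuinely uniform in $q$ for a fixed type, which is not automatic: the Lang--Weil bound applied to the image of $w$ only gives a main-term count like $(1+O(q^{-1/2}))q^{\dim \cG}$, so for very small $q$ one needs either to absorb the finitely many exceptional pairs $(S,w)$ into the constant or to invoke the stronger form of Proposition~7 that controls the image variety geometrically (independently of $q$). Once this uniformity is in hand, the rest of the argument is bookkeeping over finitely many types.
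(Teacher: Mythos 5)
Your proposal is correct and follows essentially the same route as the paper, which simply states the lemma as a consequence of \cite[Proposition 7]{La} without supplying further argument; you merely make explicit the routine reductions (finitely many Dynkin types and twists in bounded rank, absorbing small $q$ into the constant, and descending from $\cG^F$ to $S = \cG^F/\bfZ(\cG^F)$ at the cost of a bounded factor). The one small imprecision is the phrase that the fibres of $\pi$ over $w(S)$ have size exactly $|\bfZ(\cG^F)|$ --- they have size at most $|\bfZ(\cG^F)|$ --- but this is exactly the direction of the inequality you use, so the conclusion stands.
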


\begin{cor}\label{regular}
For any $r_0$ and any nontrivial word $w \neq 1$, there exists a constant $Q = Q(w,r_0)$ such that

\begin{enumerate}[\rm(i)]

\item $w(S)$ contains a regular semisimple element $s$, and 

\item $|x^S| \geq 4e|S|^{1/2} \log^{1/2}|S|$ for any regular semisimple element $x \in S$,
\end{enumerate}
for all finite simple groups $S$ of Lie type of rank 
$\leq r_0$ defined over $\FF_q$ with $q \geq Q$.
\end{cor}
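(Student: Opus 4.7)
The plan is to verify parts (i) and (ii) separately. Since the rank $r$ of $\cG$ is bounded by $r_0$, all combinatorial data attached to the root datum of $\cG$ are uniformly controlled, and all the relevant orders---$|S|$, $|\cG^F|$, $|\bfZ(\cG^F)|$, and centralizers of regular semisimple elements---are polynomials in $q$ of bounded degree with leading terms and implicit constants depending only on $r_0$.

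For part (i), Lemma \ref{image} supplies a constant $c = c(w,r_0) > 0$ with $|w(S)| \geq c|S|$, so it suffices to show that for $q \geq Q_1(w,r_0)$ the complement of the regular semisimple locus $S_{rs}$ in $S$ has size strictly less than $c|S|$. Because $\cG$ is connected (hence geometrically irreducible) and the regular semisimple open set $\cG_{rs}$ is dense, its complement is a closed subvariety of $\cG$ of codimension at least one, cut out by the vanishing of the discriminant of the adjoint quotient. By Lang--Weil, the number of $\FF_q$-points of this complement is $O(q^{\dim \cG - 1})$, with implicit constant depending only on $r_0$. Dividing by $|\bfZ(\cG^F)|$ yields $|S \setminus S_{rs}| = O(|S|/q)$, which is less than $c|S|$ once $q$ is large.

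For part (ii), let $x \in S$ be regular semisimple and fix a lift $\tilde x \in \cG^F$. Its centralizer in $\cG$ is an $F$-stable maximal torus $\cT$, so $|C_{\cG^F}(\tilde x)| = |\cT^F| \leq (q+1)^r$, and hence $|C_S(x)| \leq (q+1)^r$. Combining this with the lower bound $|S| \geq c_0 q^{\dim \cG}$ (valid for $q$ large enough in terms of $r_0$) and the obvious estimate $\log|S| \leq C \log q$, we obtain, for constants $c_1, C$ depending only on $r_0$,
$$\frac{|x^S|}{|S|^{1/2}\log^{1/2}|S|} \;\geq\; \frac{|S|^{1/2}}{(q+1)^r \log^{1/2}|S|} \;\geq\; c_1 \cdot \frac{q^{\dim \cG / 2 - r}}{(\log q)^{1/2}}.$$
An inspection of the classification of simple algebraic groups shows that $\dim \cG > 2r$ for every type, the extremal case being $A_1$ where $\dim \cG = 3$ and $r = 1$. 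Thus the right side is a positive power of $q$ divided by $\sqrt{\log q}$ and exceeds $4e$ for $q \geq Q_2(r_0)$. Setting $Q := \max(Q_1,Q_2)$ completes the proof.

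The only input that is not a routine polynomial-in-$q$ manipulation is the codimension-one bound in part (i); I would justify this by citing the standard geometric description of $\cG_{rs}$ as the complement of a discriminant hypersurface. The main case to watch throughout is $A_1$: for all larger simple types both the codimension estimate in (i) and the exponent gap $\dim \cG - 2r$ controlling (ii) have substantial slack, so no additional arguments are required.
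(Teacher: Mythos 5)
Your treatment of part (ii) is essentially the paper's: bound $|C_S(x)|$ by $|\cT^F| \le (q+1)^r$, compare to $|S|$, and use a gap between $\dim\cG$ and $r$ (the paper uses $|G| > (q-1)^{3r}$, i.e.\ $\dim\cG \ge 3r$, which gives the stronger conclusion $|x^S| > |S|^{3/5}$, but your weaker observation $\dim\cG > 2r$ suffices). For part (i), both you and the paper start from Lemma~\ref{image}; the difference is in how you show that non--regular-semisimple elements are rare. The paper simply cites \cite[Theorem 1.1]{GL}, which asserts that the proportion of regular semisimple elements in $S$ exceeds $1 - \bigl(\tfrac{3}{q-1} + \tfrac{2}{(q-1)^2}\bigr)$; you instead sketch a Lang--Weil argument.

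That sketch has a genuine gap in the Suzuki--Ree case, which \emph{is} in the scope of this corollary (indeed Proposition~\ref{suzuki-ree} invokes Corollary~\ref{regular} to find regular semisimple elements in $w_i(S)$ for ${}^2B_2$, ${}^2G_2$, ${}^2F_4$). For those groups the Steinberg endomorphism $F$ is not a Frobenius of the variety $\cG$ over any $\FF_q$; the parameter $q$ is a half-integral power of $2$ or $3$ and $\cG^F$ is not literally the set of $\FF_q$-points of $\cG$ or of its closed non-regular-semisimple locus. Standard Lang--Weil therefore does not directly give $\bigl|\{g \in \cG^F : g \text{ not r.s.s.}\}\bigr| = O(q^{\dim\cG - 1})$. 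One could repair this by replacing Lang--Weil with a Grothendieck--Lefschetz trace-formula estimate for $F$-fixed points together with Deligne's bounds on the eigenvalues of $F$ on $\ell$-adic cohomology, but this requires more care than "by Lang--Weil" (and the same caution applies, though less seriously, to the ordinary twisted groups ${}^2A_r$, ${}^2D_r$, ${}^3D_4$, ${}^2E_6$, where $F$ twists by a graph automorphism). The cleanest fix is simply to cite \cite[Theorem 1.1]{GL}, which is stated uniformly for all types including Suzuki--Ree and gives the needed quantitative bound directly.
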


\begin{proof}
According to \cite[Theorem 1.1]{GL}, the proportion of regular semisimple elements in $S$ defined over $\FF_q$ is more than $1-f(q)$, with 
$$f(q) := \frac{3}{q-1}+\frac{2}{(q-1)^2}.$$
Applying Lemma \ref{image} and choosing $Q$ so that $f(Q) < c(w,r_0)$, we see that 
$w(S)$ contains a regular semisimple element $s$ whenever the rank of $S$ is at most $r_0$ and $q \geq Q$. 

Next, view $S$ as $G/\bfZ(G)$ for $G := \cG^F$, and consider an inverse image $g \in G$ of $x$ in 
$G$ that is regular semisimple. Note that $|\bfC_G(g)| \leq (q+1)^r$ and so 
$|\bfC_G(x\bfZ(G))| \leq (q+1)^r|\bfZ(G)|$. Also, $|G| > (q-1)^{3r}$ and $|\bfZ(G)| \leq r_0+1$. Therefore,
$$|s^S| = \frac{|S|}{|\bfC_S(x)|} = \frac{|G|}{|\bfC_G(x\bfZ(G))|} \geq \frac{|G|}{(q+1)^r(r_0+1)} 
   > |S|^{3/5} >  4e|S|^{1/2} \log^{1/2}|S|$$
when $q \geq Q$ and we choose $Q$ large enough.   
\end{proof}

Next we recall the following fact:

\begin{lem}\label{regular-bound}
For any $r_0$, there is a constant $C = C(r_0)$ such that
$$|\chi(s)| \leq C$$ 
for all finite simple group $S$ of Lie type of rank $\leq r_0$,
for all regular semisimple elements $s \in S$, and for all $\chi \in \Irr(S)$.
\end{lem}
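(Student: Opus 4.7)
The plan is to combine the Deligne--Lusztig character formula with Lusztig's classification of irreducible characters. First I would reduce the problem to bounding $|\chi(s)|$ for $\chi \in \Irr(G)$ where $G := \cG^F$, since every $\chi \in \Irr(S)$ inflates to an irreducible character of $G$ trivial on the center $\bfZ(G)$; the rank bound $r \leq r_0$ then gives an absolute bound $|W| \leq w_0(r_0)$ on the Weyl group of $\cG$.

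The first technical ingredient is the Deligne--Lusztig character formula restricted to regular semisimple elements. For $s \in G$ regular semisimple with centralizer $T_0 = \bfC_{\cG}(s)$ a maximal torus of $\cG$, any $F$-stable maximal torus $T$ of $\cG$, and any $\theta \in \Irr(T^F)$, the general character formula collapses to
\[ R_T^\cG(\theta)(s) = \sum_w \theta^w(g^{-1}sg), \]
a sum over cosets $w \in N_G(T)^F/T^F$ whose representatives $g \in G$ satisfy $g^{-1}sg \in T$. This sum vanishes unless $T_0$ and $T$ are $G$-conjugate, and in all cases has at most $|W|$ terms, giving $|R_T^\cG(\theta)(s)| \leq w_0(r_0)$.

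The second step is to write $\chi(s)$ as a bounded combination of such Deligne--Lusztig values. By the second orthogonality relation for Deligne--Lusztig virtual characters, any class function agrees with its ``uniform projection'' onto the $\CC$-span of the $R_T^\cG(\theta)$ at all semisimple classes, so $\chi(s) = \chi^{\mathrm{unif}}(s)$. Lusztig's classification shows that the only pairs $(T, \theta)$ with $\langle \chi, R_T^\cG(\theta) \rangle \neq 0$ correspond under duality to pairs $(T^*, t^*)$ in the dual group with $t^*$ in the rational Lusztig series of $\chi$; up to $G$-conjugacy there are at most $|W|$ such tori. Moreover, the Jordan decomposition of characters identifies each $|\langle \chi, R_T^\cG(\theta) \rangle|$ with a multiplicity of a unipotent character of $\bfC_{G^*}(t^*)^F$ in an appropriate Deligne--Lusztig virtual character; in bounded rank these multiplicities and the number of unipotent characters involved are all bounded by a constant $N_0 = N_0(r_0)$.

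Combining the three ingredients, I obtain $|\chi(s)| \leq |W| \cdot N_0 \cdot w_0(r_0) =: C(r_0)$. The hard part is the uniform bound on the multiplicities $|\langle \chi, R_T^\cG(\theta) \rangle|$ in bounded rank, which is standard but requires some bookkeeping with Lusztig's theory (via Digne--Michel, say). An alternative shortcut is to cite Gluck's explicit bounds on character values of finite groups of Lie type at regular semisimple elements, which yields the desired conclusion directly in bounded rank.
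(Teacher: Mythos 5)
The paper proves this lemma by a single citation: it is ``a direct consequence of [GLL, Proposition 5],'' together with the remark that the same argument goes through for the Suzuki--Ree groups. Your main argument is, in effect, an unpacking of the proof of that cited result: the Deligne--Lusztig character formula evaluated at regular semisimple elements, the uniform-projection principle, and Lusztig's Jordan decomposition in bounded rank are exactly the ingredients behind [GLL, Proposition 5]. So the approach is correct and is in substance the same as the paper's, just written out rather than cited; since all three ingredients are available for the Suzuki--Ree groups as well, your sketch also covers the case the paper flags separately.

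Two points are worth correcting. First, a precision: the uniform projection $\chi^{\mathrm{unif}}$ agrees with $\chi$ only at \emph{regular} semisimple elements, because the class characteristic function of a regular semisimple class is a uniform function; for non-regular semisimple classes this can fail. That is all you need here, but ``at all semisimple classes'' overstates the principle. Second, and more substantively, your proposed shortcut via ``Gluck's explicit bounds on character values at regular semisimple elements'' does not yield the lemma. Gluck's theorems bound the character \emph{ratio} $|\chi(g)|/\chi(1)$ for noncentral $g$, typically by a quantity of order $q^{-1/2}$; since $\chi(1)$ can be a high power of $q$, this gives no bound on $|\chi(s)|$ that is independent of $q$. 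Likewise the column-orthogonality bound $|\chi(s)| \le |\bfC_G(s)|^{1/2} \le (q+1)^{r/2}$ grows with $q$. The $q$-independence of the constant $C(r_0)$ genuinely requires the Deligne--Lusztig analysis (equivalently, a direct citation of [GLL, Proposition 5], as the paper does), so the Gluck alternative should be dropped.
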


\begin{proof}
Note that if $S$ is not a Suzuki-Ree group, then the statement is a direct consequence of \cite[Proposition 5]{GLL}. But in fact the same proof goes through in the case $S$ is a Suzuki-Ree
group.
\end{proof}

\begin{prop}\label{suzuki-ree}
Theorem \ref{main1} holds for Suzuki and Ree groups, with $S_1 = \{1\}$.
\end{prop}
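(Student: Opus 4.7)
The plan is to apply Corollary \ref{subset} and reduce Proposition \ref{suzuki-ree} to producing regular semisimple conjugacy classes $s_1^S \subseteq w_1(S)$ and $s_2^S \subseteq w_2(S)$ satisfying conditions (i)--(iii) with $S_1 = \{1\}$. Since every Suzuki or Ree group is a simple group of Lie type of rank at most $2$, Lemma \ref{image} and Corollary \ref{regular} apply with $r_0 = 2$. Consequently, once $q$ exceeds a bound depending only on $w_1$ and $w_2$, each $w_i(S)$ contains a regular semisimple element $s_i$ whose conjugacy class has size at least $4e|S|^{1/2}\log^{1/2}|S|$, verifying (iii); condition (i) is the main result of \cite{LST}.

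For (ii), the character-theoretic reformulation in \eqref{bound1} reduces the task to controlling the character sum
\[
E(g) := \sum_{\chi \in \Irr(S) \setminus \{1_S\}} \frac{\chi(s_1)\chi(s_2)\bar\chi(g)}{\chi(1)}
\]
by $|E(g)| \leq 1/2$ for every $g \in S \setminus \{1\}$, provided $|S|$ is sufficiently large. Because $s_1, s_2$ are regular semisimple and $S$ has rank at most $2$, Lemma \ref{regular-bound} supplies a uniform bound $|\chi(s_i)| \leq C$ for every $\chi \in \Irr(S)$, so
\[
|E(g)| \leq C^2 \sum_{\chi \neq 1_S} \frac{|\chi(g)|}{\chi(1)}.
\]
The problem thus reduces to showing that this last sum tends to $0$ as $q \to \infty$ uniformly in $g \neq 1$.

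For this final step I would combine the explicit character tables of the Suzuki and Ree families ($\tw{2}B_2(q)$, $\tw{2}G_2(q)$, $\tw{2}F_4(q)$) with a uniform character-value bound of the shape $|\chi(g)| \leq \chi(1)^{1-\delta}$ for some fixed $\delta > 0$ and all $g \neq 1$, as is available for groups of Lie type of bounded rank. Since the number of irreducible characters of $S$ is polynomial in $q$ while the minimum nontrivial character degree grows as a positive power of $q$, one then has $\sum_{\chi \neq 1_S} \chi(1)^{-\delta} \to 0$ as $q \to \infty$, completing the argument. The principal obstacle is securing such a $\delta$ that works uniformly for every $g \in S \setminus \{1\}$, most delicately for elements of small order with relatively large centraliser (notably nontrivial unipotent elements), where the required character bounds are more subtle and may demand direct inspection of the character tables or Deligne--Lusztig-theoretic inputs.
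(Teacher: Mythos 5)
Your initial reduction matches the paper: use Corollary~\ref{regular} to find regular semisimple elements $s_i\in w_i(S)$ with large conjugacy classes (giving~(iii)), quote \cite{LST} for~(i), and use Lemma~\ref{regular-bound} to pull out the constant $C$, so that~(ii) reduces to showing $\sum_{\chi\neq 1_S}|\chi(g)|/\chi(1)\to 0$ uniformly in $g\neq 1$. Up to here you are exactly on the paper's track.

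The gap is in your proposed way of estimating that last sum. A uniform power bound $|\chi(g)|\le\chi(1)^{1-\delta}$ combined with the crude count ``$\#\Irr(S)$ is polynomial in $q$, minimum degree grows polynomially'' does not close. Take $S={}^{2}G_2(q^2)$: there are $\sim q^2$ nontrivial irreducible characters, the minimum nontrivial degree is $q^4-q^2+1\sim q^4$, and the only generic character-value bound available without table inspection is $|\chi(g)|\le|\bfC_S(g)|^{1/2}\le q^3$. Against the min-degree character this forces $\delta\le 1/4$, and then $\#\Irr(S)\cdot(\min\chi(1))^{-\delta}\sim q^2\cdot q^{-1}=q$, which diverges. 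The same difficulty occurs for ${}^{2}B_2(q^2)$ (where the crude estimate is $O(1)$, not $o(1)$) and is worse for ${}^{2}F_4(q^2)$, where the four small-degree characters $\chi_{1,\dots,4}$ cannot even be controlled by the generic $\sqrt{|\bfC_S(g)|}$ bound at the two special classes with large centralizers. What actually makes the argument work, and what the paper does, is to split the nontrivial characters by degree: there are only $O(1)$ characters near the minimum degree, and the remaining $\sim q^{\mathrm{rank}}$ characters have degree at least on the order of the Steinberg degree, so $\sum|\chi(g)|/\chi(1)\lesssim q^{\mathrm{rank}}\cdot\max|\chi(g)|/q^{\dim}$ is genuinely $O(1/q)$; the few small characters are then handled by explicit values from the tables (Burkhardt for ${}^{2}B_2$, Ward for ${}^{2}G_2$, CHEVIE together with \cite{FG} and \cite{Lu2} for ${}^{2}F_4$). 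You gesture at this in your final sentence, but the "uniform $\delta$ plus counting" step you put forward as the main route would not survive even the smallest of the three families, so the table-by-table estimate is not an optional refinement but the core of the proof.
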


\begin{proof}
Let $S = \tw2 B_2(q^2)$, $\tw2 G_2(q^2)$, or $\tw2 F_4(q^2)$.
By \cite[Proposition 6.4.1]{LST} and Corollary \ref{regular}, there exists $Q_1 = Q(w_1,w_2)$ such that 
$w_1(S)w_2(S) = S$, and $w_i(S)$ contains a regular semisimple
element $s_i$ satisfying the condition \ref{regular}(ii) for $i = 1,2$, whenever $q \geq Q_1$. By 
Lemma \ref{regular-bound}, there is some $C > 0$, independent from $q$, 
such that $|\chi(s_i)| \leq C$ for all
$\chi \in \Irr(S)$ and $i = 1,2$. We will now prove that there is some $B > 0$, independent from $q$,
such that 
\begin{equation}\label{s-r}
  \sum_{1_S \neq \chi \in \Irr(S)}\frac{|\chi(g)|}{\chi(1)} \leq \frac{B}{q}
\end{equation}  
for all $1 \neq g \in S$. Taking $q \geq \max(Q_1,2BC^2)$, we will achieve (\ref{bound1}).
  
First let $S = \tw2 B_2(q^2)$ with $q \geq \sqrt{8}$. 
The character table of $S$ is known; see, e.g., \cite{Bu}. In particular,
$\Irr(S)$ consists of $q^2+3$ characters: $1_S$, two characters of degree 
$q(q^2-1)/\sqrt{2}$, and all the other ones have degree $\geq (q^2-1)(q^2-q\sqrt{2}+1)$. Furthermore, 
$$|\chi(g)| \leq q\sqrt{2}+1$$
for all $1_S \neq \chi \in \Irr(S)$ and $1 \neq g \in S$. It follows that  
$$\sum_{1_S \neq \chi \in \Irr(S)}\frac{|\chi(g)|}{\chi(1)} \leq (q\sqrt{2}+1)
     \left(\frac{2\sqrt{2}}{q(q^2-1)} + \frac{q^2}{(q^2-1)(q^2-q\sqrt{2}+1)}\right) < \frac{5}{q}$$ 
as stated.    

Next suppose that $S = \tw2 G_2(q^2)$ with $q \geq \sqrt{27}$. 
The character table of $S$ is known, see e.g. \cite{Wa}. In particular,
$\Irr(S)$ consists of $q^2+8$ characters: $1_S$, one character of degree 
$q^4-q^2+1$, six characters of degree $\geq q(q^2-1)(q^2-q\sqrt{3}+1)/\sqrt{12}$,
and the remaining characters of degree $\geq q^6/2$. Furthermore, 
$|\chi(g)| \leq \sqrt{|\bfC_S(g)|} \leq q^3$ for all $1 \neq g \in S$. It follows that  
$$\sum_{1_S \neq \chi \in \Irr(S)}\frac{|\chi(g)|}{\chi(1)} \leq q^3
     \left(\frac{1}{q^4-q^2+1} + \frac{6\sqrt{12}}{q(q^2-1)(q^2-q\sqrt{3}+1)} + 
     \frac{q^2}{q^6/2}\right) < \frac{5}{q}$$ 
as stated.    

Suppose now that $S = \tw2 F_4(q^2)$ with $q \geq \sqrt{8}$.
The (generic) character table of $S$ is known in principle,
but not all character values are given explicitly in \cite{Chevie} (in particular, $10$ families of characters
are not listed therein).  On the other hand, according to \cite{FG} and \cite{Lu2}, $\Irr(S)$ consists of 
$q^4+4q^2+17$ characters: $\chi_0 := 1_S$, four characters $\chi_{1,2,3,4}$ of degree 
$$\chi_{1,2}(1) = q(q^4-1)(q^6+1)/\sqrt{2},$$ 
$$\chi_3(1) = q^2(q^4-q^2+1)(q^8-q^4+1),~~~\chi_4(1) = (q^2-1)(q^4+1)(q^{12}+1);$$
all the other ones have degree $> q^{20}/48$ (when $q \geq \sqrt{8}$). 
The orders $|\bfC_S(g)|$ are listed in \cite{Chevie}, in particular, $|\bfC_S(g)| < 2q^{30}$ when
$1 \neq g \in S$. It follows that $|\chi(g)| < \sqrt{|\bfC_S(g)|} < \sqrt{2}q^{15}$ and so 
\begin{equation}\label{ree1}
  \sum_{\chi_{0,1,2,3} \neq \chi \in \Irr(S)}\frac{|\chi(g)|}{\chi(1)} < 
    \frac{\sqrt{2}q^{15}(q^4+4q^2+12)}{q^{20}/48} + \frac{\sqrt{2}q^{15}}{(q^2-1)(q^4+1)(q^{12}+1)} 
    < \frac{144}{q}.
\end{equation}    
Among all nontrivial conjugacy classes of $S$, there are two classes $g_{1,2}^S$ with 
$$|\bfC_S(g_1)| =  q^{24}(q^2-1)(q^4+1),~~|\bfC_S(g_2)| = q^{20}(q^4-1),$$
and all the other ones have centralizers of order $<4q^{20}$, cf. \cite{Chevie}. Hence if 
$g \notin \{1\} \cup g_1^S \cup g_2^S$ then $|\chi_i(g)| < 2q^{10}$ and so
\begin{equation}\label{ree2}
  \sum_{\chi = \chi_{1,2,3}}\frac{|\chi(g)|}{\chi(1)} \leq \frac{3 \cdot 2q^{10}}{q(q^4-1)(q^6+1)/\sqrt{2}}
  < \frac{10}{q}.
\end{equation}
Finally, for $g = g_{1,2}$, using \cite{Chevie} one can check that 
$$|\chi_{1,2}(g)| \leq q(q^6-q^4+1)/\sqrt{2},~~~|\chi_3(g)| \leq q^8-q^4+q^2,$$
whence
\begin{equation}\label{ree3}
  \sum_{\chi = \chi_{1,2,3}}\frac{|\chi(g)|}{\chi(1)} \leq \frac{\sqrt{2}q(q^6-q^4+1)}{q(q^4-1)(q^6+1)/\sqrt{2}}
  + \frac{q^8-q^4+q^2}{(q^2-1)(q^4+1)(q^{12}+1)}
  < \frac{1}{q}.
\end{equation} 
All together, (\ref{ree1})--(\ref{ree3}) imply (\ref{s-r}) for $S = \tw2 F_4(q^2)$.
\end{proof}

\begin{prop}\label{bounded}
Theorem \ref{main1} holds for all (sufficiently large) finite non-abelian simple groups $S$ of Lie type of bounded rank, with $S_1 = \{1\}$.
\end{prop}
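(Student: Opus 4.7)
The plan is to carry out a uniform version of the argument of Proposition~\ref{suzuki-ree}, replacing the explicit character-table bookkeeping for the individual Suzuki/Ree families with uniform asymptotic estimates valid across all bounded-rank finite simple groups of Lie type. First, as in that proposition, I would apply \cite[Proposition 6.4.1]{LST} and Corollary~\ref{regular} to choose, for all $q \geq Q_1 = Q_1(w_1,w_2,r_0)$, regular semisimple elements $s_i \in w_i(S)$ whose conjugacy classes have size at least $4e|S|^{1/2}\log^{1/2}|S|$, and to arrange $w_1(S)w_2(S) = S$; this already gives parts (i) and (iii) of Theorem~\ref{main1}. Lemma~\ref{regular-bound} then supplies a constant $C = C(r_0)$ with $|\chi(s_i)| \leq C$ for all $\chi \in \Irr(S)$ and $i=1,2$.

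Via the standard Frobenius class-algebra formula, condition~(ii) with $S_1 = \{1\}$ is equivalent to
$$\left|\sum_{1_S \neq \chi \in \Irr(S)} \frac{\chi(s_1)\chi(s_2)\overline{\chi(g)}}{\chi(1)}\right| \leq \frac{1}{2}$$
for every $1 \neq g \in S$, and in view of the bound $|\chi(s_i)| \leq C$, it suffices to prove
$$\sum_{1_S \neq \chi \in \Irr(S)} \frac{|\chi(g)|}{\chi(1)} \leq \frac{B}{q^{\eta}}$$
for some $B = B(r_0) > 0$ and $\eta = \eta(r_0) > 0$, uniformly in non-identity $g$ and in finite simple groups $S$ of Lie type of rank $\leq r_0$ defined over $\FF_q$. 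This is the bounded-rank analogue of estimate~(\ref{s-r}) of Proposition~\ref{suzuki-ree}; once it is established, choosing $q$ large enough compared to $C$, $B$, and $1/\eta$ yields (ii).

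I would derive this character-ratio bound by splitting $\Irr(S)$ into a ``low-degree'' part and a ``high-degree'' part. In bounded rank, the Deligne--Lusztig / Lusztig classification produces only finitely many rational types of cuspidal data and Levi subgroups, so low-degree characters form a polynomially bounded family whose degrees are explicit polynomials in $q$; their contribution to the sum is $O(q^{-\eta})$ by direct estimation together with $|\chi(g)| \leq \sqrt{|\bfC_S(g)|}$. For high-degree characters, one uses the same crude bound $|\chi(g)| \leq \sqrt{|\bfC_S(g)|}$ combined with a Landazuri--Seitz-type lower bound $\chi(1) \geq c q^{\ell(r_0)}$ and a polynomial bound on $|\Irr(S)|$. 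Alternatively one may invoke the Gluck-type inequality $|\chi(g)| \leq D \chi(1)^{1-\delta}$ valid for non-identity $g$ in bounded-rank Lie-type groups, together with a Witten-zeta estimate $\sum_{\chi \neq 1_S} \chi(1)^{-\delta} = o(1)$ as $q \to \infty$; both ingredients are available in \cite{LST} and \cite{GLL}.

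The main obstacle is this uniform character-ratio estimate itself. Unlike the Suzuki--Ree case, where the character tables are essentially explicit and a short computation suffices, for an arbitrary bounded-rank family one must either marshal structural bounds from Deligne--Lusztig theory (to control the low-degree characters arising from the finitely many rational types of cuspidal data) or invoke the Gluck--Liebeck--Shalev character machinery. All other steps --- the production of regular semisimple $s_i$ in $w_i(S)$, the bound on $|\chi(s_i)|$, and the reduction via the Frobenius formula --- are immediate from the results already cited earlier in this section.
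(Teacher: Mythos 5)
Your proposed strategy --- to prove a uniform estimate of the form
$$\sum_{1_S \neq \chi \in \Irr(S)} \frac{|\chi(g)|}{\chi(1)} \;\leq\; \frac{B}{q^{\eta}}$$
for all nontrivial $g$ in every bounded-rank group of Lie type --- does not work, because the estimate is simply false for groups of small rank. Take $S = \mathrm{PSL}_2(q)$ and $g$ a regular split semisimple element. There are roughly $q/2$ principal series characters $\chi$ of degree $q+1$, and for each of them $\chi(g) = \alpha(g)+\alpha(g)^{-1}$ for some nontrivial character $\alpha$ of the split torus, so $|\chi(g)|$ is bounded but generically of order $1$ (not decaying in $q$). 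Their combined contribution to the sum is already $\Theta(1)$, not $O(q^{-\eta})$. A similar calculation for $g$ regular unipotent also gives a sum of order $1$. The Suzuki--Ree case in Proposition~\ref{suzuki-ree} goes through precisely because those groups are anomalous: they have about $q^2$ characters but nearly all of degree $\gtrsim q^4$ (resp.\ $q^6$, $q^{20}$), so the numerology happens to make the absolute-value sum small. That does not persist across all bounded-rank types. The Cauchy--Schwarz fallback $\sum |\chi(g)|/\chi(1) \leq \sqrt{|\bfC_S(g)|}\,\bigl(\sum \chi(1)^{-2}\bigr)^{1/2}$ gives a bound of order $q^{r/2}$, which is useless; and the Witten-zeta route you mention requires $\sum_\chi \chi(1)^{-\delta} = o(1)$ with $\delta$ the (very small) exponent from the character-ratio bound of \cite[Theorem 1.2.1]{LST}, which fails badly since in bounded rank the number of characters grows like $q^r$ while the minimal nontrivial degree grows like a fixed power of $q$.

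The paper's actual argument avoids needing any bound on the full character sum by choosing $s_1$ and $s_2$ so that almost all terms in the sum vanish identically. The key device is to locate regular semisimple elements $s_i \in w_i(\cG^F)$ lying in a \emph{weakly orthogonal} pair of $F$-stable maximal tori $\cT_1,\cT_2$ (\cite[Definition~2.2.1]{LST}); then by \cite[Proposition~2.2.2]{LST} any $\chi$ with $\chi(s_1)\chi(s_2)\neq 0$ must be unipotent. Using \cite{MSW}, \cite{GT}, and \cite{LM} one can choose the pair of tori so that only $\kappa \leq 4$ characters survive (and $\kappa = 2$ outside type $D_{2l}$), each with $|\chi(s_1)\chi(s_2)| = 1$. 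The sum then collapses to at most three nontrivial terms: when $\kappa = 2$ the only surviving nontrivial character is the Steinberg character, bounded by the divisibility argument $|\St(g)/\St(1)| \leq 1/2$; when $\kappa = 4$ the three nontrivial terms are each bounded by \cite[Theorem~1.2.1]{LST}. The existence of regular semisimple $s_i \in w_i(\cG^F)$ in a \emph{prescribed} torus $\cT_i^F$ is itself nontrivial and comes from \cite[Corollary~5.3.3]{LST} (a positive proportion of $\cT_i^F$ lies in $w_i(\cG^F)$) combined with an upper bound on the number of non-regular elements in $\cT_i^F$ from \cite{Lu1}. So the missing idea in your proposal is precisely the use of weakly orthogonal tori to kill almost all of the character sum rather than estimate it.
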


\begin{proof}
By Proposition \ref{suzuki-ree}, we may assume that $S$ is not a Suzuki or Ree group. 
Assume that $S$ is defined over $\FF_q$ and of rank $\leq r_0$. Then we view 
$S$ as $\cG^F/\bfZ(\cG^F)$ for some simple, simply connected algebraic group $\cG$, of rank
$r \leq r_0$, and some Steinberg endomorphism $F:\cG \to \cG$. 
According to \cite[Theorem 1.7]{LS2}, $w_1(S)w_2(S) = S$ when $q$ is large enough.  
By \cite[Corollary 5.3.3]{LST}, 
there exists a positive constant $\delta = \delta(w_1,w_2,r_0)$ 
such that for any $F$-stable maximal torus $\cT$ of $\cG$ and for $i = 1,2$,
$$|\cT^F \cap w_i(\cG^F)| \geq \delta|\cT^F| \geq \delta(q-1)^r.$$
On the other hand, part (3) of the proof of \cite[Theorem 2.1]{Lu1} shows that  
$\cT^F$ contains at most $2^r r^2 (q+1)^{r-1}$ non-regular elements. Hence, if we choose
$$q > \max(5,1 + 3^{r_0}r_0^2/\delta),$$
then $\cT^F \cap w_i(\cG^F)$ contains a regular semisimple element. Now we apply this observation
to a pair of $F$-stable maximal tori $\cT_1$, $\cT_2$ of $\cG$ that is {\it weakly orthogonal} in 
the sense of \cite[Definition 2.2.1]{LST} and get regular semisimple elements 
$s_i \in \cT^F \cap w_i(\cG^F)$ for $i = 1,2$. By \cite[Proposition 2.2.2]{LST}, if 
$\chi \in \Irr(\cG^F)$ is nonzero at both $s_1$ and $s_2$, then $\chi$ is unipotent (and so
trivial at $\bfZ(\cG^F)$). In this case, 
the results of \cite{DL} imply that $\chi(s_1)$ does not depend on
the particular choice of the element $s_1$ of given type, and similarly for $\chi(s_2)$. Also, 
$|s_i^S| \geq 4e|S|^{1/2}\log^{1/2}|S|$ if $q > \max(Q(w_1,r_0),Q(w_2,r_0))$, cf. 
Corollary \ref{regular}.

We claim that we can find such a pair $\cT_1,\cT_2$ such that there are $\kappa \leq 4$ characters 
$\chi \in \Irr(\cG^F)$ with $\chi(s_1)\chi(s_2) \neq 0$, and moreover
$|\chi(s_1)\chi(s_2)| = 1$ for all such $\chi$. Indeed, this can be done with 
$\kappa = 2$ for $\cG^F$ of type $A_r$ by \cite[Theorem 2.1]{MSW}, of type 
$\tw2 A_r$ by \cite[Theorem 2.2]{MSW}, of type $C_r$ by \cite[Theorem 2.3]{MSW}, 
of type $B_r$ by \cite[Theorem 2.4]{MSW}, of type $\tw2 D_r$ by \cite[Theorem 2.5]{MSW},
and of type $D_{2l+1}$ by \cite[Theorem 2.6]{MSW}. For type $D_{2l}$ we can get 
$\kappa = 4$ by using \cite[Proposition 2.3]{GT}. For the exceptional groups of Lie type,
we can get $\kappa = 2$ by using \cite[Theorem 10.1]{LM}. 

Now consider any nontrivial element $g \in S$.  
Certainly, if $\kappa = 2$, then these
characters are the trivial character and the Steinberg character $\St$ of $\cG^F$. Since 
$S$ is simple, $\St$ is faithful and so $|\St(g)| < \St(1)$. But $\St(g) \in \bbZ$ divides $\St(1)$, 
so we get $|\St(g)/\St(1)| \leq 1/2$ and 
$$\sum_{1_S \neq \chi \in \Irr(S)}\left|\frac{\chi(s_1)\chi(s_2)\bar\chi(g)}{\chi(1)}\right| 
    = \frac{|\St(g)|}{\St(1)} \leq 1/2,$$
as desired. Finally, assume $\kappa = 4$ (so $\cG^F$ is of type $D_{2l}$). By 
\cite[Theorem 1.2.1]{LST} we have 
$$\sum_{1_S \neq \chi \in \Irr(S)}\left|\frac{\chi(s_1)\chi(s_2)\bar\chi(g)}{\chi(1)}\right| 
    \leq 3q^{-1/481} < 1/2$$
if $q > 6^{481}$.          
\end{proof}

%\section{Classical Groups}
To deal with (classical) groups of unbounded rank, we recall
the notion of the {\emph{support} of an element of a classical group \cite[Definition 4.1.1]{LST}.
For $g\in GL_n(\FF)\subset GL_n(\bar\FF)$, the support is the codimension of the largest eigenspace of $g$ acting on $\FF^n$.
The support of any element in a classical group $G(\FF)$ is the support of its image under the natural representation $\rho\colon G(\bar \FF)\to GL_n(\bar\FF)$.
Most elements have large support; we have the following quantitative estimate:

\begin{lem}\label{support}
Let $S$ be a finite simple classical group of rank $r \geq 8$ and $B \geq 1$ any constant.
If $r \geq 8B+3$, then the set $S_1$ of elements
of support $< B$ can contain at most $|S|^{1/2}$ elements of $S$.
\end{lem}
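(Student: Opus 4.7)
\emph{Plan.} My approach is direct enumeration: each element of $S_1$ acts as a scalar $\lambda$ on some subspace $U$ of codimension $k \le B-1$ of the natural module, and I would count the quadruples $(k,U,\lambda,g)$ with overcounting, then compare the result to $|S|^{1/2}$.

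Write the natural module as $V \cong \FF_{q_0}^n$, where $q_0 = q$ in the linear, symplectic, and orthogonal cases and $q_0 = q^2$ in the unitary case, and $n \in \{r+1,\, 2r,\, 2r+1\}$ depending on type. For each $k \in \{0, \ldots, B-1\}$, I would count triples $(U, \lambda, g)$ with $U \subseteq V$ a subspace of codimension $k$, $\lambda \in \FF_{q_0}^\times$, and $g \in GL(V)$ satisfying $g|_U = \lambda \cdot \id_U$. The number of such $U$ is the Gaussian binomial $\binom{n}{k}_{q_0} \le C \cdot q_0^{k(n-k)}$; the scalar $\lambda$ has at most $q_0$ choices; and in a basis adapted to $U$, any such $g$ is block upper triangular with $\lambda I_{n-k}$ in one diagonal block, an arbitrary $(n-k)\times k$ off-diagonal block, and an arbitrary element of $GL_k(\FF_{q_0})$ in the other diagonal block, giving at most $q_0^{k(n-k)+k^2} = q_0^{kn}$ choices for $g$. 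Multiplying these and summing over $k$, the $k = B-1$ term dominates, and the total is bounded by $C_B \cdot q_0^{2(B-1)n - (B-1)^2 + 1}$ for a constant $C_B$ depending only on $B$.

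Next I would compare this bound to $|S|^{1/2}$. One has $|S| \ge c\, q^d$ with $d = n^2 - 1$ in types $A_{n-1}$ and $\tw 2\! A_{n-1}$, $d = n(2n-1) \sim 2r^2$ in types $D_r$ and $\tw 2\! D_r$, and $d = n(2n+1) \sim 2r^2$ in types $B_r$ and $C_r$. In each case the desired inequality $2(B-1)n \log q_0 + O_B(1) < \tfrac12 d \log q$ reduces to a linear comparison of $n$ and $B$. For the symplectic and orthogonal families, $n \sim 2r$ and $d/2 \sim r^2$, so one only needs $r \gtrsim 2B$; for the unitary family, where $q_0 = q^2$ effectively doubles the left-hand exponent while $d$ retains its linear-type value $n^2 - 1$, the condition sharpens to $n \gtrsim 8B$, which is exactly what the hypothesis $r = n - 1 \ge 8B + 3$ provides.

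The main obstacle is thus cleanly handling the unitary case, in which the Hermitian form is defined over $\FF_{q^2}$ so each additional codimension in the eigenspace costs a factor $q^2$ in the enumeration, whereas $|PSU_n(q)|$ only grows as $q^{n^2}$. This asymmetry is precisely what forces the coefficient $8$ in the hypothesis; the remaining ``$+3$'' absorbs the constant $C$ in the Gaussian binomial estimate, the $\gcd(n, q\pm 1)$ correction between $|S|$ and $|\cG^F|$, and the factor $B$ from summing $k$ from $0$ to $B-1$. No structural input beyond the order formulas for the classical groups is needed.
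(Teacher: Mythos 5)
Your proof takes a genuinely different route from the paper's and, modulo one point you leave unjustified, the approach works. The paper bounds the count of low-support elements in the quasi-simple cover $L$ by (number of conjugacy classes of $L$) times (maximum size of a class with low support); the former uses the class-number estimate of Fulman--Guralnick \cite{FG}, and the latter produces a $g$-invariant decomposition $V = U\oplus W$ with $U$ inside the dominant eigenspace and $W$ non-degenerate (cf.\ \cite[Lemma 6.3.4]{LST}), forcing $\bfC_L(g)$ to contain a large classical subgroup. You instead directly enumerate matrices $g\in GL(V)$ with a $\lambda$-eigenspace of codimension $\le B-1$, via the Gaussian binomial bound and the block-upper-triangular shape of such $g$; this dispenses with the external input from \cite{FG} and the invariant-decomposition lemma and is more elementary. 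Both arguments correctly single out the unitary family (subspaces of $\FF_{q^2}^n$ cost $q^2$ per dimension while $|S|$ grows only like $q^{n^2}$) as the bottleneck dictating the coefficient $8$ in $r\ge 8B+3$. Your overcounting in all of $GL_n(\FF_{q_0})$ rather than in $L$, and the slips in your order formulas for types $B,C,D$ (you write $d=n(2n\pm 1)$ where it should be $r(2r\pm 1)=n(n\pm 1)/2$, and $r\gtrsim 2B$ where your own estimate gives $r\gtrsim 4B$), are harmless because the hypothesis leaves ample slack, but they should be corrected in a final write-up.

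The one substantive gap is your unremarked assertion that the dominant eigenvalue $\lambda$ lies in $\FF_{q_0}^\times$, and hence that the eigenspace $U$ can be taken inside the rational module $V=\FF_{q_0}^n$ rather than only inside $\bar\FF^n$ (recall that support is defined over $\bar\FF$). This does hold, but it needs an argument: since $B<n/2$ (a consequence of $r\ge 8B+3$ in each family), the $\lambda$-eigenspace has $\bar\FF$-dimension exceeding $n/2$; if $\lambda$ were not fixed by the $q_0$-power Frobenius, its conjugate would be a distinct eigenvalue with an eigenspace of the same dimension, which is impossible. The paper invokes precisely this via \cite[Proposition 4.1.2]{LST}; you should cite it or include the one-line argument.
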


\begin{proof}
We will bound the total number $N$ of elements $g$ of support $\leq B$ in 
$L = SL_n(q)$, $SU_n(q)$, $Sp_n(q)$, or $SO^\pm_n(q)$ 
(note that $S \hookrightarrow L/\bfZ(L)$). Let $V = \FF_q^n$, respectively 
$\FF_{q^2}^n$, $\FF_q^n$, $\FF_q^n$, denote the natural $L$-module. By the results in 
\cite[\S3]{FG},  the number of conjugacy classes in $L$ is less than $16q^r \leq q^{r+4}$. Since 
$B < n/2$, $g$ has a primary eigenvalue $\lambda \in \FF_q^\times$, respectively
$\lambda^{q+1} = 1$, or $\lambda = \pm 1$, cf. \cite[Proposition 4.1.2]{LST}. Moreover, 
one can show that 
$V$ admits a $g$-invariant decomposition $V = U \oplus W$ into a direct (orthogonal if 
$L \neq SL_n(q)$) sum of (non-degenerate if $L \neq SL_n(q)$) subspaces, with
$U \leq \Ker(g-\lambda \cdot 1_V)$ and $m := \dim(U) \geq n-2B$ (see \cite[Lemma 6.3.4]{LST} 
for the orthogonal case).

Consider the case $L = SL^\epsilon_n(q)$, with $\epsilon = +$ for 
$SL$ and $\epsilon = -$  for $SU_n(q)$. Then $\bfC_L(g)$ contains 
$SL^\epsilon_m(q)$. It follows that 
$$|g^L| \leq \frac{|SL^\epsilon_n(q)|}{|SL^\epsilon_m(q)|} < \frac{2q^{n^2-1}}{q^{m^2-1}/2}
    = 4q^{n^2-m^2} \leq q^{4nB+2},$$
as $n \geq m \geq n-2B$. Hence,
$$N \leq q^{n(4B+1)+3} \leq q^{(n^2-3)/2} \leq |S|^{1/2}.$$

Suppose now that $L = SO^\pm_n(q)$.  Then $\bfC_L(g)$ contains 
$SO^\pm_m(q)$. It follows that 
$$|g^L| \leq \frac{|SO^\pm_n(q)|}{|SO^\pm_m(q)|} < \frac{q^{n(n-1)/2}}{q^{m(m-1)/2}/2}
    = 2q^{(n-m)(n+m-1)/2+1} \leq q^{(2n-1)B+2},$$
and so
$$N \leq q^{B(2n-1)+r+6} \leq q^{(n(n-1)/2-1)/2} \leq |S|^{1/2}.$$

Consider the case $L = Sp_n(q)$, so $n=2r$ and $m$ are even.  Then $\bfC_L(g)$ contains 
$Sp_m(q)$. It follows that 
$$|g^L| \leq \frac{|Sp_n(q)|}{|Sp_m(q)|} < \frac{q^{n(n+1)/2}}{q^{m(m+1)/2}/2}
    = 2q^{(n-m)(n+m+1)/2+1} \leq q^{(2n+1)B+2},$$
and so
$$N \leq q^{B(2n+1)+r+6} \leq q^{(n(n+1)/2-1)/2} \leq |S|^{1/2}.$$
\end{proof}

\begin{thm}\label{clas}
Theorem \ref{main1} holds for all simple classical groups of sufficiently large rank. 
\end{thm}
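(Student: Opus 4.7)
The proof extends the strategy of Proposition \ref{bounded}, with the support function from Lemma \ref{support} used to confine the exceptional set $S_1$ once the rank $r$ grows without bound. Write $S=\cG^F/\bfZ(\cG^F)$ for $\cG$ a simple, simply connected classical algebraic group of rank $r$ over $\FF_q$, with natural module of dimension $n$. Condition (i) of Theorem \ref{main1} is provided by \cite[Theorem 1.7]{LS2} for $|S|$ sufficiently large. To produce $s_1,s_2$, I would select a weakly orthogonal pair of $F$-stable maximal tori $\cT_1,\cT_2\subseteq\cG$ in the sense of \cite[Definition 2.2.1]{LST}; for each classical type such pairs are constructed uniformly in \cite[Section 6]{LST}. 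The bound $|\cT_i^F\cap w_i(\cG^F)|\ge\delta|\cT_i^F|$ from \cite[Corollary 5.3.3]{LST}, combined with the estimate on non-regular elements in $\cT_i^F$ recalled in the proof of Proposition \ref{bounded}, then yields regular semisimple $s_i\in\cT_i^F\cap w_i(\cG^F)$ once $r$ is large enough relative to $q$ and $\delta$. Condition (iii) follows from the centralizer bound $|\bfC_S(s_i)|\le(q+1)^r|\bfZ(\cG^F)|$ together with $|S|=q^{\Theta(r^2)}$.

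By weak orthogonality and \cite[Proposition 2.2.2]{LST}, every $\chi\in\Irr(\cG^F)$ with $\chi(s_1)\chi(s_2)\neq 0$ is unipotent, hence trivial on $\bfZ(\cG^F)$ and so descends to a character of $S$; for each such $\chi$, the results of \cite{DL} yield $|\chi(s_i)|\le c_0$ for a constant $c_0=c_0(w_1,w_2)$ independent of $q$ and $r$. The remaining ingredient is the character-ratio estimate of \cite[Theorem 1.2.1]{LST}, which bounds $|\chi(g)|/\chi(1)$ by a quantity shrinking in the support $\sigma(g)$. Combined with the uniform (in $q$ and $r$) boundedness of $\sum_\chi\chi(1)^{-\alpha}$ over nontrivial unipotent characters of $S$ for any fixed $\alpha>0$, it follows that there is a constant $B=B(w_1,w_2)$ such that
\begin{equation*}
\Sigma(g):=\sum_{\substack{1_S\neq\chi\in\Irr(S)\\ \chi\text{ unipotent}}}\frac{|\chi(s_1)\chi(s_2)\chi(g)|}{\chi(1)}\le\frac{1}{2}
\end{equation*}
for every $g\in S$ with $\sigma(g)\ge B$.

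Finally, set $S_1$ to be the set of elements of $S$ of support less than $B$. Lemma \ref{support} then gives $|S_1|\le|S|^{1/2}$ as soon as $r\ge 8B+3$, which is ensured by the hypothesis of sufficiently large rank. The main technical obstacle is the uniform character-ratio estimate: one must verify that the constants in \cite[Theorem 1.2.1]{LST} and in the summability of $\sum_\chi\chi(1)^{-\alpha}$ over unipotent characters of $S$ do not degrade as $q\to\infty$ or $r\to\infty$. The regime of $q$ large and $r$ bounded is already covered by Proposition \ref{bounded}; the novel case is $q$ small and $r$ large, where the polynomial growth in $r$ of the count of unipotent characters of $S$ must be dominated by the support-driven decay supplied by the character-ratio bound.
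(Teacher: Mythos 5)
Your high-level strategy matches the paper's: write $S_1$ as the small-support set and use Lemma~\ref{support} to bound $|S_1|$, obtain condition (iii) from the centralizer bound, and reduce (ii) to showing that the character sum over unipotent characters is at most $1/2$ for elements of large support. However, two of the technical steps as you have described them contain gaps.

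First, your construction of the regular semisimple elements $s_i$ does not work in the relevant regime. You propose to combine $|\cT_i^F\cap w_i(\cG^F)|\ge\delta|\cT_i^F|\ge\delta(q-1)^r$ from \cite[Corollary 5.3.3]{LST} with the bound $\le 2^r r^2(q+1)^{r-1}$ on the number of non-regular elements in $\cT_i^F$, as in Proposition~\ref{bounded}, and claim this produces a regular semisimple $s_i$ ``once $r$ is large enough relative to $q$ and $\delta$.'' That is backwards: the inequality $\delta(q-1)^r > 2^r r^2(q+1)^{r-1}$ requires $q$ large relative to $r$, and fails outright for fixed $q$ as $r\to\infty$. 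The case $q$ bounded, $r$ unbounded is precisely what this theorem must cover, and it needs a different mechanism. The paper instead invokes the results of \cite{LST} designed for unbounded rank --- Propositions 6.2.4 and 6.1.1 for $SL$, $SU$, $Sp$, and Propositions 6.3.5 and 6.3.7 for the orthogonal groups --- which directly produce regular semisimple $s_i\in w_i(S)$ with the right properties for all $r\ge r_1(w_1,w_2)$.

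Second, your bound on the character sum is conditional on an assertion you identify as ``the main technical obstacle'' but never establish: the uniform (in $q$ and $r$) boundedness of $\sum_\chi\chi(1)^{-\alpha}$ over nontrivial unipotent characters, combined with a claim that $|\chi(s_i)|$ is bounded by a constant independent of $q$ and $r$. Neither is immediate, and the paper sidesteps both. The propositions from \cite{LST} cited above give the stronger information that at most $\kappa\le 4$ characters (for $SL$, $SU$, $Sp$) or at most $\kappa=\kappa(w_1,w_2)$ characters (for orthogonal groups) have $\chi(s_1)\chi(s_2)\neq0$, with $|\chi(s_1)\chi(s_2)|=1$ respectively $\le C(w_1,w_2)$. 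With a sum over a \emph{bounded} number of terms, a single application of the character-ratio bound \cite[Theorem 1.2.1]{LST}, namely $|\chi(g)|/\chi(1)<q^{-\sqrt{B}/481}$ for $g$ of support $\ge B$, suffices by choosing $B$ so that $(\kappa-1)C^2 2^{-\sqrt{B}/481}<1/2$. You should replace the two problematic steps by these references; the rest of your argument, including the use of Lemma~\ref{support} and the centralizer bound for (iii), is sound.
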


\begin{proof}
(a) View $S = G/\bfZ(G)$ with $G = \cG^F$ as above and let $r := \rank(\cG)$. We will show that 
there are some $r_0 = r_0(w_1,w_2) > 8$ and $B = B(w_1,w_2)$ such that Theorem \ref{main1} holds
when $r \geq r_0$, for suitable regular semisimple elements $s_1,s_2 \in S$ and with 
$S_1$ being the set of elements in $S$ of support $< B$. By Lemma \ref{support},
$|S_1| \leq |S|^{1/2}$ if $r_0 \geq 8B+3$. 

Again, note that for any regular semisimple element $h \in G$, $\bfC_\cG(h)$ is a maximal torus
(as $\cG$ is simply connected) and so $|\bfC_G(h)| \leq (q+1)^r$. It follows that 
$|\bfC_G(h\bfZ(G))| \leq (q+1)^r|\bfZ(G)|$ and so $|\bfC_S(h\bfZ(G))| \leq (q+1)^r$. Also,
$|G| > q^{r(r+1)}$ and $|\bfZ(G)| \leq r+1$. So when $r \geq r_0 > 8$ we have 
$$|\bfC_S(h\bfZ(G))| \leq (q+1)^r < \left(\frac{q^{r(r+1)}}{r+1}\right)^{1/3} < |S|^{1/3}.$$  
In particular, $s_1$ and $s_2$ satisfy condition (iii) of Theorem \ref{main1} when $r_0 \geq 9$. 
As mentioned above, condition (i) of Theorem \ref{main1}  follows from 
\cite[Theorem 1.1.1]{LST}. So it suffices to establish (\ref{bound1}) for all $g \in S \setminus S_1$.

\smallskip
(b) Suppose first that $\cG^F$ is a special linear, special unitary, or symplectic group. By 
Propositions 6.2.4 and 6.1.1 of \cite{LST}, there is some $r_1 = r_1(w_1,w_2)$ with the following 
property. When $r \geq r_1$, there are regular semisimple elements $s_i \in w_i(S)$ for $i = 1,2$ such 
that there are at most $\kappa \leq 4$ 
irreducible characters $\chi_i \in \Irr(S)$ with $\chi_i(s_1)\chi_i(s_2) \neq 0$, 
$1 \leq i \leq \kappa$, and $\chi_1 = 1_S$. Moreover, 
$|\chi_i(s_1)\chi_i(s_2)| = 1$ for $1 \leq i \leq \kappa$. Now we choose $B \geq 1443^2$ and 
consider any $g \in S \setminus S_1$. By \cite[Theorem 1.2.1]{LST}, 
$$\frac{|\chi(g)|}{\chi(1)} < q^{-\sqrt{B}/481} < q^{-3} \leq 1/8,$$
whence
$$\left|\sum_{1_S \neq \chi \in \Irr(S)}\frac{\chi(s_1)\chi(s_2)\bar\chi(g)}{\chi(1)}\right|
    \leq \sum^\kappa_{i=2}\frac{|\chi_i(g)|}{\chi_i(1)} < 3/8,$$
as required. In fact, if $\cG^F$ is a symplectic group, then $\kappa = 2$, 
$\chi_2 = \St$, $|\chi_2(g)/\chi(1)| \leq 1/q \leq 1/2$ for all $1 \neq g \in S$ and so
we can take $S_1 = \{1\}$. 

\smallskip
(c) Suppose now that $\cG^F$ is a simple orthogonal group. By 
Propositions 6.3.5 and 6.3.7 of \cite{LST}, there are some $r_2 = r_2(w_1,w_2)$,
$\kappa = \kappa(w_1,w_2)$, and $C = C(w_1,w_2)$ with the following property. When $r \geq r_2$, there are regular semisimple elements $s_i \in w_i(S)$ for $i = 1,2$ such that there are at most 
$\kappa$ irreducible characters $\chi_i \in \Irr(S)$ with $\chi_i(s_1)\chi_i(s_2) \neq 0$, 
$1 \leq i \leq \kappa$, and $\chi_1 = 1_S$. Moreover, 
$|\chi_i(s_1)\chi_i(s_2)| \leq C$ for $1 \leq i \leq \kappa$. Now we choose $B \geq 1443^2$ such that
$$(\kappa-1)C^2 2^{-\sqrt{B}/481} < 1/2.$$
Then  for any $g \in S \setminus S_1$, by \cite[Theorem 1.2.1]{LST} we have  
$$\left|\sum_{1_S \neq \chi \in \Irr(S)}\frac{\chi(s_1)\chi(s_2)\bar\chi(g)}{\chi(1)}\right|
    \leq \sum^\kappa_{i=2}\frac{C^2|\chi_i(g)|}{\chi_i(1)} < (\kappa-1)C^2  2^{-\sqrt{B}/481} < 1/2.$$
Hence we are done by choosing $r_0 := \max(r_1,r_2,9,8B+3)$.
\end{proof}

\section{Alternating Groups}

\newcommand\N{\mathbb{N}}
\newcommand\Fix{\mathrm{Fix}}

Suppose that $G$ is a group and $X$ and $Y$ are subsets.  If we have subsets $X_1,\ldots,X_k\subseteq X$, $Y_i,\ldots,Y_k\subseteq Y$, and
$Z_1,\ldots,Z_k\subseteq Z$ such that $Z_i\subseteq X_i Y_i$ and $\bigcup Z_i = G$, then setting $X_0 = X_1\cup\cdots \cup X_k$ and $Y_0 = Y_1\cup\cdots\cup Y_k$,
we have $X_0 Y_0 = G$.  We use this construction to find $X_0\subseteq w_1(\AN)$ and $Y_0\subseteq w_2(\AN)$ such that $X_0 Y_0 = \AN$ and
$|X_0|, |Y_0|$ are of order $n!^{1/2} \sqrt{\log n!}$.

We begin by noting that for any word $w$ and any group $G$, $w(G)$ is a characteristic set, i.e. invariant under every automorphism of $G$.
In particular, $w(\AN)$ is a union of $\SN$-conjugacy classes.  If $g_1,g_2\in \AN$ and $C_1$ and $C_2$ denote their $\SN$-conjugacy classes, then
\begin{equation}
\label{Schur}
|\{(c_1,c_2)\in C_1\times C_2\mid c_1c_2 = g\}| = \frac{|C_1|\,|C_2|}{n!} \sum_\chi \frac{\chi(g_1)\chi(g_2)
\bar\chi(g)}{\chi(1)}.
\end{equation}

We recall a basic upper bound estimate \cite[Theorem 1.1]{LS1} for $|\chi(g)|$.  For $g\in \SN$ and $i\in \N$, let $\Sigma_i(g)$ denote the union of all $g$-cycles of length $\le i$
in $\{1,\ldots,n\}$.  Define $e_1(g),e_2(g),\ldots$ so that
$$n^{e_1(g)+\cdots+e_i(g)} = \max(1,|\Sigma_i(g)|)$$
for all $i\in\N$.  Define
$$E(g) = \sum_{i=1}^\infty \frac{e_i(g)}i.$$
Then for all $\epsilon > 0$ there exists $N$ such that for all $n>N$, all $g\in \SN$, and all irreducible characters $\chi$ of $\SN$,
$$|\chi(g)| \le |\chi(1)|^{E(g)+\epsilon}.$$
For example, if $g$ has a bounded number of cycles, and $n$ is sufficiently large in terms of $\epsilon$,
$$|\chi(g)| \le |\chi(1)|^\epsilon.$$
If $g$ has no more than $n^{2/3}$ fixed points and $n$ is sufficiently large in terms of $\epsilon$,
then
$$|\chi(g)| \le |\chi(1)|^{5/6+\epsilon}.$$

By a result of Liebeck and Shalev \cite[Theorem 1.1]{LiS}, for all $s>0$,
$$\lim_{n\to \infty} \sum_{\chi \in \Irr(\SSS_n)} \chi(1)^{-s} = 2.$$
Note that the trivial character and the sign character each contribute $1$ to the above sum; excluding them from the sum, the limit would be zero.
Of course, thus if $g_1$, $g_2$, and $g$ are all even permutations, then the trivial character and the sign character each contribute
$\frac{|C_1| |C_2|}{n!}$ to the expression (\ref{Schur}).  From this, we conclude:
\begin{prop}
\label{summary}
For all $\epsilon >0$ and integers $k_1$ and $k_2$, there exists an integer $N = N(\epsilon,k_1,k_2)$ such that if $n>N$ and $C_1$ and $C_2$ are even conjugacy
classes in $\SN$ consisting of $k_1$ and $k_2$ cycles respectively, then every $g\in \AN$ with 
no more than $n^{2/3}$ fixed points is represented in at least
$$(1-\epsilon)\frac{|C_1|\,|C_2|}{|\AN|}$$
different ways as $x_1 x_2$, $x_1\in C_1$, $x_2\in C_2$.
\hfill $\Box$
\end{prop}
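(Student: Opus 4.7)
The plan is to apply the Frobenius formula (\ref{Schur}) and show that, after excluding the two ``trivial'' characters (the trivial character $1_{\SN}$ and the sign character $\mathrm{sgn}$), the remaining character sum is negligible. Since $g_1, g_2, g$ are all even, both $1_{\SN}$ and $\mathrm{sgn}$ evaluate to $1$ on each of them, so their combined contribution to $\sum_\chi \chi(g_1)\chi(g_2)\bar\chi(g)/\chi(1)$ is exactly $2$. Because $|\AN| = n!/2$, showing that the rest of the sum has absolute value at most $2\epsilon$ will give the target lower bound $(1-\epsilon)|C_1||C_2|/|\AN|$.

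To bound the contribution of a character $\chi \neq 1_{\SN}, \mathrm{sgn}$, I would use the Larsen--Shalev estimate $|\chi(h)|\le \chi(1)^{E(h)+\eps'}$ recalled above. Since $g_1$ has only $k_1$ cycles, each $\Sigma_i(g_1)$ has size at most $k_1 i$, so $e_1(g_1)+\cdots+e_i(g_1) = \log_n \max(1,|\Sigma_i(g_1)|) \to 0$ for each fixed $i$, and a short argument (essentially the ``bounded number of cycles'' example in the excerpt) yields $E(g_1) \to 0$ as $n \to \infty$; the same holds for $g_2$. Since $g$ has at most $n^{2/3}$ fixed points, the excerpt gives $|\chi(g)| \le \chi(1)^{5/6+\eps'}$ for $n$ large. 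Combining,
\begin{equation*}
\left|\frac{\chi(g_1)\chi(g_2)\bar\chi(g)}{\chi(1)}\right| \le \chi(1)^{5/6 + 3\eps' - 1} = \chi(1)^{-1/6 + 3\eps'}.
\end{equation*}

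Choosing $\eps' < 1/18$ makes the exponent at most $-s$ for some $s>0$, and now the Liebeck--Shalev identity $\sum_{\chi \in \Irr(\SN)} \chi(1)^{-s} \to 2$ applies. Since $1_{\SN}$ and $\mathrm{sgn}$ already contribute $1$ each to that sum, the remaining sum $\sum_{\chi \neq 1, \mathrm{sgn}} \chi(1)^{-s}$ tends to $0$. Choosing $N$ large enough so that this tail is below $2\eps$, and such that the asymptotic character bounds above hold uniformly in $g_1, g_2$ (given $k_1, k_2$) and in $g$ (given the fixed point condition), completes the argument. The main technical point — though not a true obstacle — is keeping the three small parameters $\eps'$ (in the character bounds), $s$ (the exponent used in Liebeck--Shalev), and $\eps$ (the target accuracy) consistent and verifying that the asymptotic ``for $n$ sufficiently large'' in the bound on $|\chi(g_1)|$ can be made uniform over all even classes $C_1$ with exactly $k_1$ cycles, which follows because $E(g_1)$ depends only on the cycle-length profile and the bounded-cycle estimate is uniform in that profile.
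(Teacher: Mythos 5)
Your proposal is correct and is essentially the argument the paper intends (the paper omits the proof, marking the proposition with a $\Box$ after the phrase ``From this, we conclude''): apply the Frobenius formula \eqref{Schur}, peel off the trivial and sign characters which contribute $2$ in total since $g_1,g_2,g$ are all even, bound each remaining term by $\chi(1)^{E(g_1)+E(g_2)+(5/6)+3\eps'-1}$ using the Larsen--Shalev estimate with $E(g_1),E(g_2)\to 0$ uniformly (because $g_1,g_2$ have a bounded number of cycles), and then kill the tail with the Liebeck--Shalev bound $\sum_\chi \chi(1)^{-s}\to 2$ after noting that $1_{\SN}$ and $\mathrm{sgn}$ already account for the limit $2$. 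Your bookkeeping of the small constants $\eps'$, $s$, $\eps$ and the uniformity observation are exactly what is needed to complete the argument.
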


Now, by \cite[Theorem 1.3]{LS2}, if $n$ is sufficiently large, $w_1(\AN)$ and $w_2(\AN)$ each contain elements $g_1$ and $g_2$ respectively, with at most $6$ cycles of length $>1$ and 
$\leq 17$ cycles in total. So there is some constant $A$ such that $|\bfC_{\SSS_n}(g_i)| < An^6$ 
for $i = 1,2$, whence 
$$|w_i(\AN)| \geq |(g_i)^{\SN}| > 2e(n!)^{1/2}\log^{1/2}n!.$$
Defining $Z_1$ as the set of elements of $\AN$ with no more than $n^{2/3}$ fixed points,
it follows from Proposition~\ref{main-prop}, that there exist $X_1$ and $Y_1$  
contained in $w_1(\AN)$ and $w_2(\AN)$ respectively, such that
$Z_1\subseteq X_1 Y_1$.  

\medskip
What remains is to define $X_i,Y_i,Z_i$ for $i\ge 2$ to cover the elements of $\AN$ with more than $n^{2/3}$ fixed points.

The number of elements of $\AN$ with at least $m := \lceil2n/3\rceil$ fixed points is less than
$$\sum_{i=m}^n \binom ni (n-i)! = \sum_{i=m}^n \frac{n!}{i!} < 2\frac{n!}{m!} \le n!^{1/3+o(1)}.$$
Therefore, we can represent each element $g$ with at least $m$ fixed points as $x_g y_g$, $x_g\in w_1(\AN)$, $y_g \in w_2(\AN)$, and we can define $X_2$ to be the union of all such $x_g$
and $Y_2$ the union of all such $y_g$.  Note that 
$$|X_2|, |Y_2| < (n!)^{1/3+o(1)}.$$
This reduces the problem to elements $g$ with 
$$n^{2/3}\le |\Fix(g)|\le 2n/3.$$

For each $T\subseteq \{1,2,\ldots,n\}$ with $m := |T|\in [n^{2/3},2n/3]$, we define 
$\SSS_T\subseteq \SN$ to be the pointwise stabilizer of $T$ in $\SN$ 
and $\AAA_T$ to be the pointwise stabilizer of $T$ in $\AN$.
Thus $\SSS_T$ is isomorphic to $\SSS_{n-m}$ and $\AAA_T$ is isomorphic to 
$\AAA_{n-m}$,
where $n-m\in [n/3,n-n^{2/3}]$.  For each $T$, we choose an $\SSS_T$-conjugacy class $C_{1,T}$ in $w_1(\AAA_T)$ and an $\SSS_T$-conjugacy class $C_{2,T}$ in $w_2(\AAA_T)$, each consisting of at most $17$ cycles when regarded as elements of $\SSS_{n-m}$.  (Of course there are $|T|$ additional $1$-cycles when we regard them as elements of $\SN$.)
If $n$ is sufficiently large, $n-m$ is larger than the constant $N$ of Proposition~\ref{summary}, and we conclude that every fixed point free element of $\AAA_{n-m}$ can be written in 
at least
$$(1-\epsilon)\frac{|C_{1,T}|\,|C_{2,T}|}{|\AAA_{n-m}|}$$
ways.  Applying Proposition~\ref{main-prop} and arguing as above, we conclude that there exist subsets $X_T$ and $Y_T$ of $C_{1,T}$ and $C_{2,T}$ respectively such that 
$X_T Y_T$ contains all elements of $\SN$ with fixed point set exactly $T$, and $|X_T|$ and $|Y_T|$ are bounded above by 
$$c (n-m)!^{1/2}\log^{1/2}(n-m)!,$$
where
$c$ is independent of $n$ or $m$.
An upper bound for the cardinality of $\bigcup_T X_T$  is
$$cn\log n\sum_{n^{2/3}\le m\le 2n/3} \binom nm (n-m)!^{1/2}\le cn^3\max\Bigl\{\binom nm (n-m)!^{1/2}\Bigm| n^{2/3}\le m\le 2n/3\Bigr\},$$
and likewise for $\bigcup_T Y_T$.

For $m\ge n^{2/3}$, we have by Stirling's approximation
$$m! > (m/e)^m.$$
So when $n>(2e^2)^3$ is large enough, we have that
$$\frac{\binom{n}{m} \cdot (n-m)!^{1/2}}{(n!)^{1/2}} = 
    \frac{(\prod^n_{j=n-m+1}j)^{1/2}}{m!} < \frac{n^{m/2}}{e^{-m}m^m}$$
$$ = 
    \left(\frac{e^2n}{m^2}\right)^{\frac{m}{2}} < \left(\frac{e^2}{n^{1/3}}\right)^{\frac{n^{2/3}}{2}} < 
    \left(\frac{1}{2}\right)^{\frac{n^{2/3}}{2}} < 
    \frac{1}{cn^3}.$$
In this case, the cardinalities of $\bigcup_T X_T$ and $\bigcup_T Y_T$ are less than 
$n!^{1/2}$.
It follows that $X_1$, $X_2$, and all the $X_T$ together have cardinality $O((n!)^{1/2}\log^{1/2}n)$,
and likewise for $Y$.  That concludes the proof of Theorem~\ref{main} in the alternating case.

\section{Groups as Products of Two Subsets}

\begin{lem}\label{cyclic}
Let $G$ be a cyclic group of prime order $p$ and $x$ any real number with 
$2 \le x\le p$.  Then there exist subsets $X$ and $Y$ of $G$ with 
$|X| \le x$ and $|Y| \le 2p/x$ such that $XY = G$.
\end{lem}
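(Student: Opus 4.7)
The plan is to identify $G$ with $\mathbb{Z}/p\mathbb{Z}$ and construct $X$ and $Y$ as arithmetic progressions so that $X+Y$ tiles the cyclic group. The natural parameter to fix first is $k := \lfloor x \rfloor$, which is at least $2$ by hypothesis; this will be $|X|$.

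If $k = p$ (which forces $x = p$), I take $X = G$ and $Y = \{0\}$; the bounds $|X| = p \le x$ and $|Y| = 1 \le 2p/x = 2$ are immediate. Otherwise $2 \le k \le p-1$, so by primality $k \nmid p$, and I may write $p = qk + r$ with $q \ge 1$ and $1 \le r \le k-1$. I then set
$$X := \{0, 1, \ldots, k-1\}, \qquad Y := \{0, k, 2k, \ldots, qk\} \bmod p.$$
Here $|X| = k \le x$ and $|Y| = q+1$. To see that $X+Y = G$, note that the integer intervals $[jk, jk+k-1]$ for $0 \le j \le q$ jointly cover $\{0,1,\ldots,(q+1)k-1\}$; since $(q+1)k - p = k - r \ge 1$, this contains $\{0,1,\ldots,p-1\}$, and reduction modulo $p$ gives $X+Y = \mathbb{Z}/p\mathbb{Z}$.

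The remaining task is to verify $q+1 \le 2p/x$, equivalently $(q+1)x \le 2p$. Since $x < k+1$, it suffices to show $(q+1)(k+1) \le 2p = 2qk + 2r$, which after expanding and simplifying becomes $(q-1)(k-1) \ge 2 - 2r$. The right-hand side is non-positive because $r \ge 1$, while the left-hand side is non-negative because $q, k \ge 1$, so the inequality holds.

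The construction itself is routine; the only step requiring a little care is the final inequality $|Y| \le 2p/x$. The choice $k = \lfloor x \rfloor$ is forced by the integrality of $|X|$, and the hypothesis $x \ge 2$ is then precisely what is needed to ensure $k \ge 2$, which in turn drives the estimate $(q+1)(k+1) \le 2qk + 2r$. I do not anticipate any genuine obstacle here.
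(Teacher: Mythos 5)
Your proof is correct, and the core construction (an interval $X=\{0,\dots,k-1\}$ together with an arithmetic progression $Y$ of common difference $k$) is the same one the paper uses in its main case. The verification, however, is genuinely cleaner. The paper first disposes of $2\le p\le 7$ by inspection, uses the symmetry $x\leftrightarrow 2p/x$ to reduce to $x\ge\sqrt{2p}$, splits off the range $x\ge p-2$ with a separate construction ($X$ the even residues, $Y=\{0,1\}$), and then in the remaining range $\sqrt{2p}\le x<p-2$ bounds $b=\lceil p/a\rceil$ somewhat delicately. You instead set $k=\lfloor x\rfloor$, write $p=qk+r$ with $1\le r\le k-1$ (here primality is used exactly where it is needed, to guarantee $r\ge 1$ when $2\le k\le p-1$), and reduce the target bound $(q+1)x\le 2p$ to the manifestly true inequality $(q-1)(k-1)\ge 2-2r$. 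This eliminates all the case splitting except for the degenerate $x=p$, and it works uniformly for every prime $p\ge 2$ with no small-prime exceptions. The trade-off is minimal: your argument is tied a bit more tightly to primality (via $k\nmid p$), while the paper's construction would transfer more readily to composite moduli; but since the lemma is stated only for prime $p$ and composites are handled later by induction in Theorem~\ref{product}, that is no loss here.
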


\begin{proof}
Identify $G$ with the additive group $\bbZ/p\bbZ$ and its elements with $0, 1, \ldots, p-1$.
The cases $2 \leq p \leq 7$ are obvious, so we will assume $p \geq 11$. Since the roles of 
$x$ and $2p/x$ are symmetric, we may assume that $x \geq \sqrt{2p} > 4$. Now if 
$x \geq p-2$ then $G = X+Y$ with $X := \{2j  \mid 0 \leq j \leq (p-1)/2\}$ and $Y = \{0,1\}$. 
Suppose $p-2 > x \geq \sqrt{2p}$. Setting $a := \lfloor x \rfloor \leq x$ and 
$b := \lceil p/a \rceil \geq p/a$, we see that $b < \max(p/a+1,2p/x)$ and $G = X+Y$ for 
$$X := \{0,1, \ldots ,a-1\},~~Y = \{ja \mid 0 \leq j \leq b-1\}.$$  
\end{proof}

\begin{lem}\label{max}
Let $G$ be a finite non-abelian simple group of order $n$. Then $G$ possesses
a maximal subgroup $M$, with $|M| \geq \sqrt{n}$ if $G = J_3$ and 
$|M| \geq \sqrt{2n}$ otherwise.
\end{lem}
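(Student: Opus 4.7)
The plan is a case-by-case verification across the classification of finite simple groups. The claim is equivalent to asserting that every non-abelian finite simple group $G$ of order $n$ admits a maximal subgroup $M$ with $|M|^2 \geq 2n$, except that for $J_3$ we only require $|M|^2 \geq n$.

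For $G = \AN$ with $n \geq 5$, take $M := \AAA_{n-1}$ acting as a point stabilizer; the required inequality $((n-1)!/2)^2 \geq n!$ reduces to $(n-1)! \geq 4n$, which holds for all $n \geq 5$. For $G$ a finite simple group of Lie type, I would take $M$ to be a maximal parabolic subgroup. For classical $G$ of Lie rank $\ell \geq 2$, the stabilizer of a point (isotropic if the natural module carries a non-degenerate form) has index of order $q^{\ell-1}$ or $q^{\ell}$ in $G$, while $|G|$ grows at least like $q^{\ell(\ell+1)}$, so $|M|^2/|G|$ grows without bound in $q$ for fixed $\ell$. For the rank-one families $PSL_2(q)$, $\tw2 B_2(q^2)$, $\tw2 G_2(q^2)$ a direct computation with the Borel subgroup $B$ gives $|B|^2/|G|$ of order $q(q-1)$, which comfortably exceeds $2$ whenever $G$ is simple (the borderline cases $PSL_2(4) \cong PSL_2(5) \cong \AAA_5$ are already covered by the alternating case). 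Exceptional groups and the finitely many small-$q$ classical groups are handled by direct inspection of the orders of maximal parabolic subgroups.

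For the sporadic simple groups, including the Tits group $\tw2 F_4(2)'$, I would consult the ATLAS of maximal subgroups and verify the inequality numerically, one group at a time. In every case except $J_3$, the largest maximal subgroup has order exceeding $\sqrt{2|G|}$. For $J_3$, however, the largest maximal subgroup has order only $8160$, while $\sqrt{|J_3|} \approx 7087.5$ and $\sqrt{2|J_3|} \approx 10023.3$, so only the weaker bound holds; this is the genuine exception singled out in the statement.

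The main obstacle is the sporadic check: one must go through the ATLAS entries for each sporadic group plus the Tits group, recognize $J_3$ as the unique exception, and confirm this by the explicit numerics above. The Lie-type analysis itself is routine once one observes that a maximal parabolic suffices for rank at least two and that a Borel handles the rank-one cases; the only subtlety is verifying that the parabolic or Borel is large enough in small rank and small $q$, where the asymptotic estimates are too coarse.
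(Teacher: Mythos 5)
Your proof is essentially the same CFSG case check that the paper uses, with the same choice of subgroups in each branch: $\AAA_{n-1}$ in the alternating case, an ATLAS scan in the sporadic case (isolating $J_3$ as the unique exception since its largest maximal subgroup $L_2(16){:}2$ has order $8160$, between $\sqrt{|J_3|}\approx 7087.5$ and $\sqrt{2|J_3|}\approx 10023.3$), and large subgroups of Lie type groups. The one place where your route diverges is the Lie type branch: the paper simply quotes the minimal-index tables (Kleidman--Liebeck Table 5.2.A for classical groups and Magaard--Malle--Tiep Table 3.5 for exceptional groups), whereas you argue directly with maximal parabolics and Borels and observe that $|P|^2/|G|\to\infty$ with $q$ in rank $\geq 2$ while the Borel handles rank one. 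Your version is more self-contained and makes the asymptotics visible, but at the cost of leaving the small-rank, small-$q$ cases and the exceptional families to a ``direct inspection'' that the paper delegates to established tables; a careful writeup would still need to pin those down explicitly (e.g., your Borel estimate for $PSL_2(q)$ with $q$ odd fails precisely at $q=5$, which you correctly dispose of via $PSL_2(5)\cong\AAA_5$). Both approaches buy the same thing, and neither avoids the classification or the ATLAS check.
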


\begin{proof}
The case of $26$ sporadic simple groups can be checked using \cite{Atlas}.
If $G = \AAA_n$ with $n \geq 5$, take $M := \AAA_{n-1}$. So we may assume
that $G$ is a finite simple group of Lie type. If $G$ is a classical group,
then the smallest index of proper subgroups of $G$ is listed in
\cite[Table 5.2.A]{KL}, whence the statement follows. If $G$ is an 
exceptional group, then Table 3.5 of \cite{MMT} lists a subgroup 
$N$ of $G$, and one can check that $|N| \geq \sqrt{2n}$. 
\end{proof}

\begin{thm}\label{product}
Let $G$ be any finite group of order $n$ and $x$ any real number with $2 \le x\le n$.  Then there exist subsets $X$ and $Y$ of $G$ with $|X| \le x$ and $|Y| \le 2n/x$ such that $XY = G$.
\end{thm}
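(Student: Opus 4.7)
The plan is to prove Theorem~\ref{product} by strong induction on $n = |G|$, with the base case $G = \bbZ/p\bbZ$ deferred to Lemma~\ref{cyclic}. The engine is a flexible \emph{subgroup reduction}: for any proper subgroup $H \le G$, with left and right transversals $T_\ell, T_r$ (so that $T_\ell H = H T_r = G$), four different constructions of $(X, Y)$ together cover every $x \in [2, n]$ as soon as $|H| \ge \sqrt{n/2}$.

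The four constructions are: \emph{dual induction} $X = X_H$, $Y = Y_H T_r$, using the inductive hypothesis on $H$ at parameter $x$, valid for $x \in [2, |H|]$; the \emph{direct} choice $X = H$, $Y = T_r$, valid for $x \in [|H|, 2|H|]$; its mirror $X = T_\ell$, $Y = H$, valid for $x \in [n/|H|, 2n/|H|]$; and \emph{primal induction} $X = T_\ell X_H$, $Y = Y_H$, using the inductive hypothesis on $H$ at parameter $x|H|/n$, valid for $x \in [2n/|H|, n]$. In each case $XY = G$ follows from $T_\ell H = H T_r = G$, while the bounds $|X| \le x$ and $|Y| \le 2n/x$ are one-line checks using $[G:H] = n/|H|$ and the inductive size bounds on $H$. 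The union of the four windows is $[2, 2|H|] \cup [n/|H|, n]$, which fills $[2, n]$ exactly when $2|H| \ge n/|H|$, i.e., when $|H| \ge \sqrt{n/2}$.

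The remaining task is to produce such an $H$ in every case except $G$ cyclic of prime order. If $G$ is non-abelian simple, Lemma~\ref{max} yields a maximal subgroup of order $\ge \sqrt{n} \ge \sqrt{n/2}$. If $G$ has a proper non-trivial normal subgroup $N$ with $|N| \ge \sqrt{n/2}$, take $H = N$. Otherwise, let $M$ be a maximal normal subgroup of $G$, so $G/M$ is simple with $|G/M| > \sqrt{2n}$. When $G/M$ is non-abelian simple, pulling back a maximal subgroup $\bar H \le G/M$ via $\pi\colon G \to G/M$ yields $\pi^{-1}(\bar H)$ of order $|M|\cdot|\bar H| \ge |M|\sqrt{|G/M|} = \sqrt{|M|\,n} \ge \sqrt{2n}$, using Lemma~\ref{max} on $G/M$ and $|M| \ge 2$ (since $G$ is not simple). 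When $G/M$ is cyclic of prime order $p = [G:M] > \sqrt{2n}$, one has $p > \sqrt{2n} > \sqrt{n/2} > |M|$ and hence $p \nmid |M|$, so a Sylow $p$-subgroup of $G$ has order exactly $p > \sqrt{2n}$; take this as $H$.

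The hard part is this last step: arranging, when no normal subgroup is large enough, to find a (necessarily non-normal) subgroup of size $\ge \sqrt{n/2}$. The Sylow argument handles the cyclic-prime-quotient situation, and the pullback from a non-abelian simple quotient feeds the problem back into Lemma~\ref{max} one level up. The $J_3$ exception in Lemma~\ref{max} (weaker bound $\sqrt{n}$ rather than $\sqrt{2n}$) is harmless, since our threshold is only $\sqrt{n/2}$. A short, explicit check for a handful of small-$n$ boundary cases (roughly $n \le 8$), where one of the four windows can degenerate, completes the argument.
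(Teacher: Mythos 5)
Your proof is correct but takes a genuinely different route from the paper's. The paper first reduces, by the symmetry $x \leftrightarrow 2n/x$, to the case $x \le \sqrt{n/2}$, and then dispatches every non-simple $G$ very cheaply: picking \emph{any} nontrivial normal subgroup $N$, either $|N| \ge x/2$ (in which case one inducts on $N$ at parameter $x$, or takes $X = N$, $Y$ a transversal) or $|N| < x/2$ (in which case one inducts on the \emph{quotient} $G/N$ at parameter $x/|N|$, pulling back $X$ to a union of $N$-cosets). This quotient trick means the paper never needs to manufacture a large subgroup when $G$ is non-simple; the only structural input is Lemma~\ref{max} for non-abelian simple $G$ and Lemma~\ref{cyclic} for the prime-cyclic base case. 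You instead drop the symmetry reduction and the quotient induction, replacing them with four windows (dual induction, direct, mirror, primal induction, all pivoting on a single subgroup $H \le G$), which together cover $[2,n]$ exactly when $|H| \ge \sqrt{n/2}$. That threshold is stronger than the paper's $|N| \ge x/2$, and it forces the extra work when no normal subgroup is that large: pulling back a maximal subgroup of a non-abelian simple quotient $G/M$ to get order $\ge \sqrt{|M|\,n} \ge \sqrt{2n}$, or taking a Sylow $p$-subgroup when $G/M$ is cyclic of prime order $p > \sqrt{2n} > |M|$. All of these steps check out. What you buy is a clean reduction to a single existence statement (every finite group that is not prime-cyclic has a proper subgroup of order $\ge \sqrt{n/2}$); what the paper buys with the quotient trick is avoiding the pullback and Sylow detours entirely. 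One small note: the ``small-$n$'' caveat you flag is essentially vacuous---the only non-prime orders below $8$ are $n = 4, 6$, where any subgroup of order $2$ or $3$ works, and for $n \ge 8$ one has $\sqrt{n/2} \ge 2$ so $|H| \ge 2$ and no window degenerates.
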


\begin{proof}
We proceed by induction on $|G|$. Note that the roles of $x$ and $y := 2n/x$ 
in the statement are symmetric, and so without loss we may assume 
$x \leq y$, i.e. $x \leq \sqrt{n/2}$. 

\smallskip
(a) Suppose that there is a subgroup $H < G$ with $|H| > x$. By the induction 
hypothesis, there exist subsets $X', Y' \subseteq H$ with $X'Y'= H$, 
$|X'| \leq x$, and $|Y'| \leq 2 |H|/x$. Decompose $G = \bigcup^m_{i=1}Hy_i$ 
with $m = [G:H]$, and let $X := X'$ and $Y := \bigcup^m_{i=1} Y'y_i$. Then 
$XY = G$, $|X| \leq x$, and $|Y| \leq m|Y'| \leq 2|G|/x$.

Next, let us consider the possibility that $H < G$ is a subgroup with
$x/2 \leq |H| < x$.  Then setting $X := H$ and $Y$ a set of coset 
representatives of $H$ in $G$, we get $G = XY$, $|X| \leq x$, and
$|Y| = [G:H] \leq  2n/x$.

Thus we are done if $G$ possesses a proper subgroup of 
order $\geq x/2$. 

\smallskip
(b) Suppose now that $G$ admits a nontrivial {\it normal} subgroup $H$ 
with $|H| < x/2$. By the induction hypothesis applied to $G/H$ and 
$x' := x/|H|$, there exist subsets $X', Y' \subseteq G/H$ with 
$|X'| \leq x'$, $|Y'| \leq 2|G/H|/x' = 2n/x$, and $X' Y' = G/H$. Now let
$X$ denote the full inverse image of $X'$ in $G$, and let $Y$ denote a
set of coset representatives in $G$ for $Y'$. Then $G = XY$,
$|X| = |X'|\cdot|H| \leq x$, and $|Y| = |Y'| \leq 2n/x$. 

\smallskip
(c) Assume $G$ is not simple: $1 \neq N \lhd G$ for some $N < G$. If 
$|N| \geq x/2$ then we are done by (a). Otherwise we are done by (b).

It remains to consider the case $G$ is simple. If $G$ is abelian, then we
can apply Lemma \ref{cyclic}. Otherwise by Lemma \ref{max} there is 
a maximal subgroup $M < G$ of order $\geq \sqrt{n} > x/2$, and so we are again
done by (a).
\end{proof}

\begin{cor}\label{root}
Any finite group $G$ admits a {\rm square root} $R$, i.e. a subset $R \subseteq G$
such that $R^2 = G$, with $|R| \leq \sqrt{8|G|}$.
\end{cor}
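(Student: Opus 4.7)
The plan is to derive this immediately from Theorem~\ref{product} by making a symmetric choice of the parameter $x$ and then unioning the two factors. Set $n := |G|$, which we may assume satisfies $n \geq 2$ (the case $n = 1$ is trivial with $R = \{1\}$). Choose $x := \sqrt{2n}$, which lies in the admissible range $[2, n]$ since $n \geq 2$. Note that with this choice we have $2n/x = \sqrt{2n} = x$.

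Applying Theorem~\ref{product} with this value of $x$ produces subsets $X, Y \subseteq G$ with $|X| \leq \sqrt{2n}$, $|Y| \leq \sqrt{2n}$, and $XY = G$. I would then define
\[
R := X \cup Y,
\]
so that $R \cdot R \supseteq X \cdot Y = G$, hence $R^2 = G$. The size bound is
\[
|R| \leq |X| + |Y| \leq 2\sqrt{2n} = \sqrt{8n},
\]
as required.

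There is essentially no obstacle here: the only subtlety is checking that the symmetric choice $x = \sqrt{2n}$ satisfies the hypothesis $2 \leq x \leq n$ of Theorem~\ref{product}, and this is automatic for $n \geq 2$. All the real work (the inductive reduction through subgroups, normal subgroups, and the maximal subgroup bound of Lemma~\ref{max}, together with the cyclic base case in Lemma~\ref{cyclic}) has already been carried out in Theorem~\ref{product}, so this corollary is just a matter of specializing and forming a union.
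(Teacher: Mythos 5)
Your proof is correct and follows exactly the paper's own argument: take $x = \sqrt{2|G|}$ in Theorem~\ref{product} to get $X, Y$ with $XY = G$ and $|X|, |Y| \le \sqrt{2|G|}$, then set $R := X \cup Y$. The only addition is your (harmless, and welcome) explicit check that $x$ lies in the admissible range $[2, n]$.
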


\begin{proof}
Taking $x = \sqrt{2|G|}$ in Theorem \ref{product}, we see that $G = XY$ with 
$|X|, |Y| \leq x$. Now set $R := X \cup Y$.
\end{proof}

\section{Square Roots of a Lie Group}

\newcommand\Mdim{\overline{\dim}\,}
In this section we show that the results of \S5 extend in a suitable sense to compact Lie groups. 
We would like to say that the minimum dimension of a square root of $G$ is half the dimension of $G$, but
we need a suitable definition of dimension.
Hausdorff dimension does not do the job;
indeed, it is not difficult to see that $S^1$ can be written as $XY$, where $X$ and $Y$ are both of Hausdorff dimension $0$.
It turns out that upper Minkowski dimension is the better notion for our purposes.

We begin by recalling some basic definitions.  A good  reference is \cite{Tao}.
For $\delta > 0$, we define the \emph{$\delta$-packing number} of a bounded 
metric space $X$, $N_\delta(X)$, to be the maximum number of 
disjoint open balls of radius $\delta$ in $X$.  We recall that the \emph{upper Minkowski dimension}, $\Mdim X$, of a bounded metric space $X$, is given by the formula
$$\Mdim X = \limsup_{\delta > 0} \frac{-\log N_\delta(X)}{\log \delta}.$$
If $\phi\colon X\to Y$ is a surjective Lipschitz map with constant $L$, then $N_{L\delta}(Y) \le N_\delta(X)$, so $\Mdim \phi(X)\le \Mdim X$.

If $[-1,1]$ is endowed with the usual metric $d(x,y) = |x-y|$, then
$$N_\delta([-1,1]) = \lfloor 1/\delta\rfloor,$$
and it follows that $\Mdim [-1,1] = 1$.
If the ring $\ZZ_p$ of $p$-adic integers is endowed with the usual metric $d(x,y) = |x-y|_p$, it follows that
$$N_\delta(\ZZ_p) = p^{\max(0,1+\lfloor-\log_p\delta \rfloor)},$$
so $\Mdim \ZZ_p = 1$.

Upper Minkowski dimension is well suited to our purposes because of the following elementary proposition, which is well known 
for subsets of Euclidean spaces \cite[8.10--8.11]{Mattila}.

\begin{prop}
\label{Mdim-product}
Let $(X,d_X)$ and $(Y,d_Y)$ be bounded metric spaces, and let $d$ be a metric on $X\times Y$ such that
\begin{equation*}
\label{product-metric}
\max (d_X(x_1,x_2),d_Y(y_1,y_2)) \le d((x_1,y_1),(x_2,y_2)) \le d_X(x_1,x_2)+d_Y(y_1,y_2).
\end{equation*}
Then
\begin{equation}
\label{upper}
\Mdim X\times Y \le \Mdim X + \Mdim Y,
\end{equation}
with equality if $\log N_\delta(X)/\log \delta$ and $\log N_\delta(Y)/\log \delta$ both converge as $\delta\to 0$.
\end{prop}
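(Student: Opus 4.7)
The proof will rest on two multiplicative estimates for the packing numbers $N_\delta$, one in each direction, from which the dimension inequalities follow by an elementary manipulation of $\limsup$'s as $\delta \to 0^+$. Throughout, I expect to use the covering number $M_\delta(X)$, defined as the minimum number of open $d$-balls of radius $\delta$ needed to cover $X$, together with the standard comparison $N_\delta(X) \le M_\delta(X) \le N_{\delta/2}(X)$: the first inequality holds because two distinct packing centers, being at distance $\ge 2\delta$, cannot lie in a single covering ball of radius $\delta$, and the second because the centers of a maximal packing by open balls of radius $\delta/2$ automatically form a $\delta$-net.

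For the upper bound \eqref{upper}, I will take minimal covers of $X$ and $Y$ by open balls of radius $\delta$. For any two such balls $B_X \subseteq X$, $B_Y \subseteq Y$ with centers $x_0$, $y_0$, the upper estimate on $d$ in the hypothesis gives $d((x_0,y_0),(x,y)) \le d_X(x_0,x) + d_Y(y_0,y) < 2\delta$ for $(x,y) \in B_X \times B_Y$, so $B_X \times B_Y$ is contained in a single open $d$-ball of radius $2\delta$. Consequently $M_{2\delta}(X\times Y) \le M_\delta(X)\,M_\delta(Y)$, and hence $N_{2\delta}(X \times Y) \le N_{\delta/2}(X)\,N_{\delta/2}(Y)$. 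Taking $-\log$, dividing by $\log(2\delta)$, and using $\log(2\delta)/\log(\delta/2)\to 1$ as $\delta \to 0^+$, the $\limsup$ delivers $\Mdim(X \times Y) \le \Mdim X + \Mdim Y$.

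For the reverse inequality under the stated convergence hypothesis, I will exploit the \emph{lower} estimate on $d$. Let $\{x_i\}$ and $\{y_j\}$ be the centers of maximal packings of $X$ and $Y$ by disjoint open balls of radius $\delta$. For distinct pairs $(x_i,y_j) \ne (x_{i'},y_{j'})$, the hypothesis yields
\[
d\bigl((x_i,y_j),(x_{i'},y_{j'})\bigr) \ge \max\bigl(d_X(x_i,x_{i'}),\, d_Y(y_j,y_{j'})\bigr) \ge 2\delta,
\]
so the open $d$-balls of radius $\delta$ around these pair-centers are pairwise disjoint and therefore $N_\delta(X \times Y) \ge N_\delta(X)\,N_\delta(Y)$. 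When each of $-\log N_\delta(X)/\log\delta$ and $-\log N_\delta(Y)/\log\delta$ converges as $\delta \to 0^+$, their sum converges to $\Mdim X + \Mdim Y$, so
\[
\Mdim(X \times Y) \ge \limsup_{\delta \to 0^+}\frac{-\log N_\delta(X) - \log N_\delta(Y)}{\log \delta} = \Mdim X + \Mdim Y,
\]
and equality follows by combining with the upper bound.

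The only delicate point I anticipate is the bookkeeping with the factors of $2$ when passing between packing and covering numbers and between scales $\delta$, $\delta/2$, and $2\delta$; this is the standard friction in arguments about upper Minkowski dimension and does not represent a genuine conceptual obstacle. Once the two multiplicativity estimates $N_\delta(X)N_\delta(Y) \le N_\delta(X\times Y)$ and $N_{2\delta}(X\times Y) \le N_{\delta/2}(X)N_{\delta/2}(Y)$ are in place, both halves of the proposition reduce to a routine $\limsup$ computation.
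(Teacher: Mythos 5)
Your overall plan — cover $X\times Y$ by products of balls for the upper bound, pack $X\times Y$ by products of packings for the lower bound, then do $\limsup$ bookkeeping — is the same as the paper's, and your bookkeeping with the factors of $2$ is sound. The paper works directly with packing numbers (covering $X$ by doubling the radius of a maximal packing), while you detour through covering numbers $M_\delta$; that is a cosmetic difference.

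There is, however, a real gap in your lower-bound argument, and the same misstatement occurs in your justification of $N_\delta \le M_\delta$. You assert that if $B(x_i,\delta)$ and $B(x_{i'},\delta)$ are disjoint open balls of a \emph{maximal packing}, then $d_X(x_i,x_{i'}) \ge 2\delta$. This is false in a general metric space, and in fact fails in exactly the setting the proposition is later applied to: in $\ZZ_p$ with $d(x,y)=|x-y|_p$, two disjoint open balls of radius $\delta$ can have centers at distance strictly less than $2\delta$ (by the ultrametric inequality, $d(x,y)\ge\delta$ already forces disjointness). So the chain $d\bigl((x_i,y_j),(x_{i'},y_{j'})\bigr) \ge \max(d_X(x_i,x_{i'}),d_Y(y_j,y_{j'})) \ge 2\delta$ does not go through. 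The conclusion you want — that the product balls $B_d\bigl((x_i,y_j),\delta\bigr)$ are pairwise disjoint — is nevertheless correct, but the route to it must use the left-hand inequality of the hypothesis \emph{directly}: if $(u,v)$ lay in both $B_d\bigl((x_i,y_j),\delta\bigr)$ and $B_d\bigl((x_{i'},y_{j'}),\delta\bigr)$, then $d_X(x_i,u),\,d_X(x_{i'},u)<\delta$ and $d_Y(y_j,v),\,d_Y(y_{j'},v)<\delta$, so $u\in B_X(x_i,\delta)\cap B_X(x_{i'},\delta)$ and $v\in B_Y(y_j,\delta)\cap B_Y(y_{j'},\delta)$, contradicting disjointness unless $(i,j)=(i',j')$. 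This is the argument the paper uses, and it yields $N_\delta(X\times Y)\ge N_\delta(X)N_\delta(Y)$ without any claim about separation of centers. (Likewise, $N_\delta\le M_\delta$ should be justified by noting that if two packing centers $x_i,x_j$ both lay in a covering ball $B(z,\delta)$, then $z\in B(x_i,\delta)\cap B(x_j,\delta)$ — not by asserting the centers are $2\delta$-separated.)
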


\begin{proof}
If $x_1,\ldots,x_m$ are the centers of a maximal collection of disjoint open balls of radius $\delta$ in $X$,
then balls of radius $2\delta$ centered at $x_1,\ldots,x_m$ cover $X$, and likewise for $Y$.
The product of any ball of radius $2\delta$ in $X$ and any ball of radius $2\delta$ in $Y$ is contained in some ball of radius $4\delta$ in $X\times Y$, so $X\times Y$ can be covered 
by $N_\delta(X) N_\delta(Y)$ balls of radius $4\delta$.  
Given any disjoint collection of balls of radius $4\delta$ in $X\times Y$, 
no two centers can lie in the same ball of radius $4\delta$.  Thus, 
$$N_{4\delta}(X\times Y) \le N_\delta(X) N_\delta(Y),$$
which proves (\ref{upper}).
On the other hand, if $x_1,\ldots,x_m$ are centers of disjoint balls of radius $\delta$ in $X$
and $y_1,\ldots,y_n$ are centers of disjoint balls of radius $\delta$ in $Y$, then
$(x_i,y_j)$ are the centers of disjoint balls of radius $\delta$ in $X\times Y$, so
$$N_{\delta}(X\times Y) \ge N_\delta(X) N_\delta(Y).$$
It follows that
$$\lim_{\delta\to 0} \frac{-\log N_\delta(X\times Y)}{\log \delta} 
= \lim_{\delta\to 0} \frac{-\log N_\delta(X)}{\log \delta} + \lim_{\delta\to 0} \frac{-\log N_\delta(Y)}{\log \delta}$$
if both limits on the right hand side exist.
\end{proof}

Now let $G$ be a compact Lie group.  
We say that a metric $d$ on $G$ is \emph{compatible} if it is left- and right-invariant by $G$ 
and there exists a coordinate map from some open neighborhood of the identity $e$ of $G$ to some open set in $\RR^n$ which
is Lipschitz in some neighborhood of $e$.  If this is true for some coordinate map, it is true for all coordinate maps at $e$, since
smooth maps between open sets in $\RR^n$ are locally Lipschitz.  Likewise, a compatible metric on a 
compact $p$-adic Lie group is
a translation-invariant metric for which there exists a coordinate map from some open neighborhood of $e$ to some open set in $\QQ_p^n$, and the choice of coordinate map does not matter.  We recall \cite[III, \S4, no.\ 3]{Bourbaki} that every real (resp. $p$-adic) Lie group admits an \emph{exponential} map from a neighborhood of $0$ in $\RR^n$ (resp. $\QQ_p^n$) which is bijective and whose inverse is a coordinate map.

\begin{prop}
Let $G$ be a compact Lie group endowed with a compatible metric.  Then $\Mdim G$ coincides 
with the usual topological dimension of $G$.
\end{prop}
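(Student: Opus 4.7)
The plan is to prove $\Mdim G = n$, where $n$ is the topological (equivalently, manifold) dimension of $G$, by transporting the question to $\RR^n$ via a chart at the identity and combining this with left-invariance of the metric and compactness of $G$.

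First I would fix a coordinate map $\phi : U \to V \subseteq \RR^n$ at $e$ with $\phi(e) = 0$ and $\phi$ Lipschitz on $U$ (possible by the compatibility hypothesis, after shrinking $U$ if necessary). Applying the observation immediately following the definition of compatibility---namely, that any two coordinate maps at $e$ differ by a smooth map between open subsets of $\RR^n$, hence one that is locally Lipschitz---to the composition of $\phi^{-1}$ with another chart, one sees that $\phi^{-1}$ is itself locally Lipschitz as a map from $V$ into $(G,d)$. Shrinking again, I may assume there is a compact neighborhood $K$ of $e$ in $G$ such that $\phi$ restricts to a bi-Lipschitz homeomorphism between $K$ and a compact neighborhood $K' \subseteq \RR^n$ of $0$.

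For the lower bound, $K'$ contains a closed Euclidean ball, which has upper Minkowski dimension exactly $n$ (a direct calculation shows $N_\delta$ of such a ball grows like $\delta^{-n}$). Applying the Lipschitz inequality $\Mdim \psi(X) \leq \Mdim X$ to $\phi^{-1}$ restricted to that ball gives $\Mdim K \geq n$, and monotonicity of $\Mdim$ under inclusion then yields $\Mdim G \geq n$. For the upper bound, compactness of $G$ lets me cover $G$ by finitely many left translates $g_1 U, \ldots, g_k U$. Since $d$ is left-invariant, each $g_i U$ is isometric to $U$ and hence, by the bi-Lipschitz equivalence with $V$, has $\Mdim \leq n$ (any bounded subset of $\RR^n$ has Minkowski dimension at most $n$, again by the packing number computation). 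The elementary subadditivity
\[
N_\delta\Bigl(\bigcup_i A_i\Bigr) \leq \sum_i N_\delta(A_i),
\]
valid for any finite union of bounded sets, then gives $\Mdim G \leq \max_i \Mdim(g_i U) \leq n$.

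The only genuine subtlety is the bi-Lipschitz claim in the first step: the definition of compatibility as stated only asks that the chart itself, not its inverse, be Lipschitz near $e$. This is handled by taking $\phi$ to be the inverse of the exponential map and invoking the smooth-map-is-locally-Lipschitz observation from the excerpt on both sides. Once this is in place, every remaining step is a routine manipulation of $\delta$-packing numbers under inclusion, finite union, and Lipschitz maps.
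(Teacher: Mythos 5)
Your proposal follows the same route as the paper: obtain a (bi-)Lipschitz chart near $e$, transfer the computation $\Mdim = n$ from a bounded open set in $\RR^n$ (via Proposition~\ref{Mdim-product}), and then use bi-invariance of the metric together with compactness and finite subadditivity of packing numbers to cover $G$ by translates. That is exactly the paper's argument, only spelled out with separate upper and lower bounds.

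The one substantive point concerns the bi-Lipschitz claim. You correctly flag that the definition of ``compatible'' only demands that the chart $\phi$ itself be Lipschitz, and that this alone is not enough (if $\phi^{-1}$ fails to be Lipschitz, the packing numbers of a neighborhood of $e$ in $(G,d)$ can blow up relative to those of its Euclidean image: e.g.\ $d(x,y)=|x-y|^{1/2}$ on $S^1$ is bi-invariant with Lipschitz chart but $\Mdim = 2$). However, your proposed repair does not actually close this gap: the ``smooth maps are locally Lipschitz'' observation bounds $\psi\circ\phi^{-1}$ as a map between open sets of $\RR^n$, and to convert that into control on $\phi^{-1}:V\to(G,d)$ you would need the other chart's inverse $\psi^{-1}$ to be Lipschitz---which is precisely the same unresolved question. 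Taking $\phi=\exp^{-1}$ does not help either, since nothing in the compatibility hypothesis singles out $\exp$. In the paper this issue is handled simply by writing ``bi-Lipschitz coordinate map'' in the proof, indicating that ``Lipschitz'' in the definition is intended to mean bi-Lipschitz; with that reading the step you tried to derive is a hypothesis, not a theorem, and both your argument and the paper's go through.
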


\begin{proof}
By Proposition~\ref{Mdim-product}, $\Mdim I^n = n$, where $I$ is any open interval in $\RR$,
and it follows that $\Mdim U = n$
for any bounded open set in $\RR^n$.  If $\phi\colon U\to G$ is a bi-Lipschitz coordinate map, then $U' := \phi(U)$ is an open subset of $G$ of dimension $n$.  Therefore, any translate of $U'$ in $G$ has dimension $n$, and likewise for any finite union of such translates.  By compactness, $G$ itself is such a union, so
$\Mdim G = \dim G$.
\end{proof}

There is also a $p$-adic version of the same proposition, whose proof is the same:

\begin{prop}
Let $G$ be a compact $p$-adic Lie group endowed with a compatible metric.  Then $\Mdim G$ coincides 
with the usual topological dimension of $G$.
\end{prop}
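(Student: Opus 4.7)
The plan is to imitate the proof of the preceding real proposition verbatim, replacing $\RR^n$ with $\QQ_p^n$ and the interval $[-1,1]$ with $\ZZ_p$. The explicit formula $N_\delta(\ZZ_p) = p^{\max(0,1+\lfloor-\log_p\delta\rfloor)}$ recorded in the excerpt shows not only that $\Mdim \ZZ_p = 1$ but that $\log N_\delta(\ZZ_p)/\log \delta$ has a genuine limit as $\delta \to 0^{+}$. Hence the equality case of Proposition \ref{Mdim-product}, applied inductively, gives $\Mdim \ZZ_p^n = n$, where I set $n := \dim G$.

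Next I extend this to any non-empty bounded open set $V \subseteq \QQ_p^n$: such a $V$ is a finite disjoint union of cosets $v_i + p^{k_i}\ZZ_p^n$, each an isometric rescaling of $\ZZ_p^n$ and therefore of Minkowski dimension $n$. The elementary bound $N_\delta(A \cup B) \le N_\delta(A) + N_\delta(B)$ then upgrades this to $\Mdim V = n$.

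Now I invoke the compatibility of the metric on $G$: there exists a coordinate chart at $e$ which is Lipschitz on some neighborhood, and by shrinking I may arrange that its inverse $\phi$, defined on an open ball $V \subseteq \QQ_p^n$ with image $U' \ni e$, is also Lipschitz, so $\phi$ is bi-Lipschitz. The Lipschitz-invariance of $\Mdim$ remarked in the excerpt then yields $\Mdim U' = \Mdim V = n$. By the two-sided invariance of the metric, every translate $gU'$ is an isometric copy of $U'$ and so also has Minkowski dimension $n$. Compactness of $G$ provides a finite cover by such translates, and the subadditivity $N_\delta\bigl(\bigcup_i X_i\bigr) \le \sum_i N_\delta(X_i)$ implies that $\Mdim$ of a finite union equals the maximum of the pieces. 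This gives $\Mdim G \le n$, while the reverse inequality is immediate from $U' \subseteq G$.

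The only step with genuine $p$-adic content is the Lipschitz property of the coordinate chart, which I expect to be the main point to verify carefully. It follows from the standard fact that a convergent $p$-adic power series is Lipschitz on the compact ball of convergence, with Lipschitz constant controlled by the sup-norm of the derivative there; and in any event this property is already built into the definition of \emph{compatible metric} given in the excerpt, so no new work is actually required.
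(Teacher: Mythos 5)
Your proof matches the paper's approach exactly: the paper simply states that the $p$-adic case has ``the same'' proof as the real case, and your argument is the natural transcription of that proof (compute $\Mdim \ZZ_p^n = n$ via the product proposition, transfer to $G$ by a bi-Lipschitz chart, translate, and use compactness). One minor imprecision: a bounded open subset of $\QQ_p^n$ need not be a \emph{finite} disjoint union of balls (consider $\ZZ_p\setminus\{0\}$), but this does not affect the argument, since you may simply shrink the chart's domain to a single compact open ball $v+p^k\ZZ_p^n$, for which the dimension computation is immediate.
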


We can now prove our lower bound for square roots of a real or $p$-adic Lie group.

\begin{prop}\label{compact1}
If $X$ and $Y$ are subsets of a compact real or $p$-adic Lie group $G$ endowed with a compatible metric $d$ 
and $XY=G$, then $\Mdim X+\Mdim Y \ge \dim G$.
In particular, if $X$ is a square root of $G$, $\Mdim X\ge (\dim G)/2$.
\end{prop}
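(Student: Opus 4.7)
The plan is to reduce the statement to the two tools already proved: the behavior of upper Minkowski dimension under Lipschitz surjections (recorded in \S6 just after the definition), and the product inequality in Proposition~\ref{Mdim-product}. The crucial observation is that the bi-invariance of the compatible metric $d$ makes multiplication a Lipschitz map from $G\times G$ to $G$.

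First I would equip $X\times Y$ (and, for later use, $G\times G$) with the sum metric
$$d_{\Sigma}\bigl((x_1,y_1),(x_2,y_2)\bigr) := d(x_1,x_2) + d(y_1,y_2),$$
which satisfies the hypotheses of Proposition~\ref{Mdim-product}. Next I would verify that the multiplication map $\mu\colon G\times G\to G$, $\mu(x,y)=xy$, is $1$-Lipschitz with respect to $d_{\Sigma}$ and $d$: by left- and right-invariance,
$$d(x_1y_1,x_2y_2) \le d(x_1y_1,x_2y_1) + d(x_2y_1,x_2y_2) = d(x_1,x_2) + d(y_1,y_2).$$
This is where the compatibility hypothesis (specifically bi-invariance) enters in an essential way.

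Since $XY=G$, the restriction $\mu\colon X\times Y\to G$ is a surjective Lipschitz map, so by the monotonicity of $\Mdim$ under Lipschitz surjections (noted just below the definition of $\Mdim$),
$$\Mdim G \le \Mdim(X\times Y).$$
Proposition~\ref{Mdim-product} applied to $X$ and $Y$ (with the product metric $d_{\Sigma}$) gives
$$\Mdim(X\times Y) \le \Mdim X + \Mdim Y,$$
and the previous proposition identifies $\Mdim G$ with the topological dimension $\dim G$. Chaining these inequalities yields $\dim G \le \Mdim X + \Mdim Y$, which is the desired bound. Specializing to $Y=X$ and $X^2=G$ gives the square root statement $\Mdim X \ge (\dim G)/2$. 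I expect the only non-formal step to be the Lipschitz estimate on $\mu$, but this is immediate from bi-invariance, so I do not anticipate any serious obstacle; the proof is essentially the assembly of the three previously established facts.
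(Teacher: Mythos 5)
Your proposal is correct and follows essentially the same route as the paper: you define the sum metric on $G\times G$, use bi-invariance of $d$ to verify that multiplication is $1$-Lipschitz, invoke monotonicity of $\Mdim$ under Lipschitz surjections and Proposition~\ref{Mdim-product}, and finish via $\Mdim G = \dim G$. The only cosmetic difference is your choice of intermediate point $x_2y_1$ (versus $g_1h_2$ in the paper) in the triangle inequality, which is immaterial.
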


\begin{proof}
Defining the metric $e$ on $G\times G$ by
$$e((g_1,h_1),(g_2,h_2)) := d(g_1,g_2)+d(h_1,h_2),$$
we have
$$d(g_1 h_1,g_2 h_2) \le d(g_1 h_1, g_1 h_2) + d(g_1 h_2, g_2 h_2) = e((g_1,h_1),(g_2,h_2)).$$
Thus, the multiplication map $m\colon G\times G\to G$ is Lipschitz.  It follows that
$$\Mdim XY = \Mdim m(X\times Y) \le \Mdim X\times Y \le \Mdim X+\Mdim Y.$$
If $XY = G$, then 
$$\Mdim X+\Mdim Y \ge \Mdim G = \dim G.$$
\end{proof}

The more interesting direction is the converse:

\begin{thm}\label{compact2}
Let $G$ be a compact real or $p$-adic Lie group, endowed with a compatible metric.  Then
$G$ has a square root of dimension $(\dim G)/2$.
\end{thm}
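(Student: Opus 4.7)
The plan is to construct the square root locally via the exponential chart at the identity and then patch globally using finitely many left translates. Let $n := \dim G$; by compatibility of the metric, $\exp$ gives a bi-Lipschitz coordinate chart from a neighborhood of $0$ in $\RR^n$ (respectively $\QQ_p^n$) to a neighborhood of $e$ in $G$. For two subspaces $L_1, L_2$ with $L_1 + L_2 = \RR^n$, the map
\[
  \mu : L_1 \times L_2 \to G, \quad \mu(x_1,x_2) := \exp(x_1)\exp(x_2)
\]
has derivative $(x_1,x_2) \mapsto x_1 + x_2$ at the origin, and the Baker--Campbell--Hausdorff formula gives $\mu(x_1,x_2) = \exp(x_1 + x_2 + O(|x_1||x_2|))$. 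The goal is to find $A \subseteq L_1$ and $B \subseteq L_2$ with $\Mdim A = \Mdim B = n/2$ such that $\mu(A \times B)$ contains a neighborhood of $e$.

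If $n = 2k$ is even, I would take $L_1 = \RR^k \oplus \{0\}$, $L_2 = \{0\} \oplus \RR^k$, and let $A, B$ be small $k$-cubes; then $L_1 \oplus L_2 = \RR^n$ is a direct sum, $\mu$ is a bi-Lipschitz local homeomorphism by the Inverse Function Theorem, and each of $A$, $B$ has Minkowski dimension $k = n/2$. If $n = 2k+1$ is odd, a fractal is needed in one coordinate: in the real case, set
\[
  K_1 := \Bigl\{ \sum_{i\ge 1} a_i 4^{-i} : a_i \in \{0,1\} \Bigr\}, \quad
  K_2 := \Bigl\{ \sum_{i\ge 1} b_i 4^{-i} : b_i \in \{0,2\} \Bigr\}.
\]
Each has upper Minkowski dimension $1/2$ (at scale $4^{-m}$ each is covered by $2^m$ intervals of length $4^{-m}$), and the base-$4$ digit identity $\{0,1,2,3\} = \{0,1\} + \{0,2\}$ gives $K_1 + K_2 = [0,1]$. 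Splitting $\RR^n = \RR^k \oplus \RR \oplus \RR^k$ with $L_1 := \RR^k \oplus \RR \oplus \{0\}^k$, $L_2 := \{0\}^k \oplus \RR \oplus \RR^k$, take $A := C \times K_1 \times \{0\}^k$ and $B := \{0\}^k \times K_2 \times C$ for a small $k$-cube $C$; then $A + B \supseteq C \times [0,1] \times C$ contains a neighborhood of $0$, and $\Mdim A = \Mdim B = k + 1/2 = n/2$. In the $p$-adic case I would use instead $K_1 := \{ \sum a_i p^i \in \ZZ_p : a_i = 0 \text{ for odd } i \}$ and $K_2 := \{ \sum a_i p^i \in \ZZ_p : a_i = 0 \text{ for even } i \}$, so that $K_1 + K_2 = \ZZ_p$ and each has Minkowski dimension $1/2$.

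To assemble the global square root, once $X_0 := \exp(A)$ and $Y_0 := \exp(B)$ satisfy $X_0 Y_0 \supseteq W$ for a neighborhood $W$ of $e$, compactness gives $G = \bigcup_{i=1}^N g_i W$, and $X := \bigcup_i g_i X_0$ is a finite union of left-isometric copies of $X_0$, so $\Mdim X = \Mdim X_0 = n/2$ and $X Y_0 = G$. Taking $R := X \cup Y_0$ yields $R^2 \supseteq X Y_0 = G$ with $\Mdim R \le n/2$, and Proposition \ref{compact1} then forces equality $\Mdim R = (\dim G)/2$. The main obstacle will be the odd-dimensional verification that $\mu(A \times B)$ contains a neighborhood of $e$: the linear cover $A + B \supseteq$ neighborhood of $0$ does not immediately transfer through the quadratic Baker--Campbell--Hausdorff correction, and to absorb it one needs, for instance, to slightly enlarge the cube $C$ and apply the Implicit Function Theorem in the transverse coordinates $(a_1, b_3)$ for each fixed $(a_2, b_2) \in K_1 \times K_2$, reducing the problem to a perturbed one-dimensional equation $a_2 + b_2 = v_2 + (\text{small correction})$ whose solutions are guaranteed by $K_1 + K_2$ containing an interval with nonempty interior.
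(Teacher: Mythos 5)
Your even-dimensional case and your global patching argument both match the paper in substance: the paper writes $X_0 = C_{v_1}\cdots C_{v_k}$ and $Y = C_{v_{k+1}}\cdots C_{v_{2k}}$ as products of one-parameter arcs rather than as $\exp$ of two $k$-cubes, but the underlying fact (the map $(t_1,\ldots,t_n)\mapsto\exp(t_1v_1)\cdots\exp(t_nv_n)$ is a local diffeomorphism at $0$) is the same one you invoke via the Inverse Function Theorem, and the compactness-and-translates finish and the appeal to Proposition \ref{compact1} for the matching lower bound are identical.

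The odd-dimensional case is where you diverge from the paper, and there the divergence opens a genuine gap that you correctly flag as ``the main obstacle'' but do not close. You place the two Cantor factors $K_1$ and $K_2$ inside $\exp$ evaluated on two different, non-commuting subspaces $L_1$ and $L_2$, so the product $\exp(a)\exp(b)$ carries a BCH correction that couples the Cantor coordinates to everything else. Your proposed repair --- freeze $(a_2,b_2)\in K_1\times K_2$, apply the IFT in the transverse coordinates $(a_1,b_3)$, and reduce to a perturbed scalar equation $a_2+b_2+\psi(a_2,b_2)=v_2$ on $K_1\times K_2$ --- does not obviously work, because $K_1\times K_2$ is totally disconnected: the intermediate value theorem is unavailable, and the image of a continuous perturbation of the sum map on a Cantor product need not remain an interval. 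The identity $K_1+K_2=[0,1]$ holds by the exact digit decomposition $\{0,1,2,3\}=\{0,1\}+\{0,2\}$, but it is not robust in the sense you need: each of $K_1$, $K_2$ has Newhouse thickness $1/2$, so the thickness product is $1/4<1$ and the gap lemma does not protect the sum against even $C^1$-small perturbations. The paper sidesteps all of this by a cleaner device: working with canonical coordinates of the second kind (products of one-parameter arcs $C_{v_i}$), it places both fractal factors in the \emph{same} one-parameter subgroup, setting $X_0 = C_{v_1}\cdots C_{v_k}\exp(Av_{k+1})$ and $Y = \exp(Bv_{k+1})C_{v_{k+2}}\cdots C_{v_{2k+1}}$. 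Since $\exp(av_{k+1})\exp(bv_{k+1})=\exp((a+b)v_{k+1})$ exactly --- no BCH correction along a single one-parameter subgroup --- one gets $X_0Y = C_{v_1}\cdots C_{v_{2k+1}}$ on the nose, and the local surjectivity reduces again to the even-case IFT fact. If you want to salvage your approach you should make the same move: keep the two fractal pieces adjacent and in the same direction $v_{k+1}$, rather than buried inside $\exp$ of two non-commuting subspaces.
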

 
\begin{proof}

Let $G$ be a real (resp. $p$-adic) Lie group,  $L$ the Lie algebra, and $\exp$ the exponential map from a neighborhood $U$ of $0$ in $L$ to a neighborhood $N$ of $e\in G$. Let $v\in L$ be a sufficiently small non-zero element, specifically, an element satisfying $[-1,1]v\subset U$
(resp. $\ZZ_p v\subset U$).  Then the function $e_v\colon [-1,1]\to G$ (resp. $e_v\colon \ZZ_p\to G$) defined by
$e_v(t) = \exp(tv)$ is Lipschitz.  Let $C_v$ denote the image of $e_v$.

Choose a basis $v_1,\ldots,v_n$ of sufficiently small vectors in $L$.
If $n = 2k$,  let $X_0 = C_{v_1}\cdots C_{v_k}$ and $Y = C_{v_{k+1}}\cdots C_{v_{2k}}$.  As $X_0$ and $Y$ are each images of sets of dimension $k$ under Lipschitz maps, 
$\Mdim X_0,\Mdim Y\le k = (\dim G)/2$.
On the other hand, $X_0 Y$ contains a neighborhood of $e$ in $G$, so letting $X$ denote a suitable finite union of left translates of $X$, we have $XY = G$ and $\Mdim X\le k$.  Thus $X\cup Y$ is a square root of $G$ of dimension $(\dim G)/2$.

If $n = 2k+1$, we  observe that there exist subsets $A$ and $B$ of $[-1,1]$ such that $\Mdim A = \Mdim B = 1/2$ and
$A+B = [-1,1]$.  We can take, for instance, the Cantor sets
$$A = -a_0 + \sum_{i=1}^\infty  a_i 4^{-i}\,,a_i\in \{0,1\};\ B = \sum_{i=1}^\infty b_i4^{-i},\,b_i\in \{0,2\}.$$
Likewise, there exist $A,B\subset \ZZ_p$ of dimension $1/2$ such that $A+B = \ZZ_p$, for instance,
$$A = \sum_{i=1}^\infty  a_i p^{2i},\,a_i\in \{0,1,\ldots,p-1\};\ B = \sum_{i=1}^\infty b_ip^{2i},\,b_i\in \{0,p,2p,\ldots,(p-1)p\}.$$
Now, setting 
$$X_0 = C_{v_1}\cdots C_{v_k} \exp(A v_{k+1}),\ Y =\exp(B v_{k+1}) C_{v_{k+2}}\cdots C_{v_{2k+1}},$$
we see that
$$X_0 Y = C_{v_1}\cdots C_{v_{2k+1}}$$
contains a neighborhood of $e$, while $\Mdim X_0, \Mdim Y\le k+1/2.$
The rest of the argument goes as before.
\end{proof}

\end{document}